\documentclass[a4paper]{article}
\usepackage{amsfonts,amsthm,latexsym,amsmath,mathrsfs,amscd,geometry}
\geometry{margin=1in}
\usepackage{amssymb}
\usepackage{dsfont}
\usepackage{latexsym}
\usepackage{appendix}
\usepackage{enumerate}
\usepackage{color}
\usepackage{upgreek}
\usepackage{geometry}
\numberwithin{equation}{section} 
\newtheorem{theorem}{Theorem}[section]
\newtheorem{lemma}[theorem]{Lemma}
\newtheorem{proposition}[theorem]{Proposition}

\newtheorem{remark}[theorem]{Remark} 
\makeatletter

\newcommand{\Rmnum}[1]{\expandafter\@slowromancap\romannumeral #1@}
\makeatother

\allowdisplaybreaks

\usepackage[colorlinks,linkcolor=blue]{hyperref} 

\begin{document}
\title{Optimal time-decay estimates for a diffusive Oldroyd-B model}
\author{Yinghui Wang\footnote{School of Mathematics, South China University of Technology, Guangzhou 510641, China. E-mail: yhwangmath@163.com} 
}
\date{}
\maketitle
\begin{abstract}
In this paper, we study the optimal time decay rates for the higher order  spatial derivatives of  solutions to a diffusive Oldroyd-B model.  
As pointed out in the Section 1.2 of Huang-Wang-Wen-Zi (J. Differential Equations 306: 456–491, 2022), how to estiblish the optimal decay estimate for the highest-order spatial derivatives of the solution  to this model is still an open problem. Motivated by Wang-Wen (Sci. China Math. 65: 1199–1228, 2022), we give a positive answer to this problem via some delicate analyses on the low and high frequency parts of the solution.
\end{abstract}
\bigbreak \textbf{{\bf Key Words}:}   Oldroyd-B model; Optimal decay rate;  Zero viscosity
\bigbreak  {\textbf{AMS Subject Classification 2020:} 35Q35, 76A10, 74H40.

\section{Introduction}
In this paper, we study the following diffusive Oldroyd-B system
\begin{eqnarray} \label{system}
    \begin{cases}
         \partial_tu+u\cdot\nabla u+\nabla p-\epsilon\Delta u=\kappa {\rm div}\tau,\\
         \partial_t\tau+u\cdot\nabla\tau-\mu\Delta\tau+\beta\tau=Q(\nabla u,\tau)+\alpha\mathbb{D}u,\\
         {\rm div}u=0,\\
         (u,\tau)(x,0)=(u_0,\tau_0),
    \end{cases}
    \end{eqnarray} on $\mathbb{R}^3\times (0,\infty)$, which is used to describe the motion of viscoelastic fluids. Here, $u=(u_1,u_2,u_3)^\top:\mathbb{R}^3\rightarrow\mathbb{R}^3$ is the velocity field of fluid, symmetric matrix $\tau\in \mathbb{S}_3(\mathbb{R})$ is the tangential part and non-Newtonian part of the stress tensor, $p\in\mathbb{R}$ is the pressure function of the fluid, $\mathbb{D}u=\frac12\left(\nabla u+\left(\nabla u\right)^\top\right)$ is the deformation tensor, and   
    $$Q(\nabla u,\tau)=\Omega\tau-\tau\Omega+b(\mathbb{D}u \tau+\tau\mathbb{D}u)$$
     admits the usual bilinear form  with the skew-symmetric part of velocity gradient $\Omega=\frac12\left(\nabla u-\left(\nabla u\right)^\top\right)$ and constant  $b\in [-1,1]$.
    The parameters   $\kappa$,   $\beta$ and $\alpha$ satisfy that $  \kappa,\beta,\alpha>0$. Moreover,
     $\epsilon \geq 0$ is the viscosity  coefficient of the fluid, and $\mu\geq0$ is the center-of-mass diffusion coefficient.  The system \eqref{system} was first proposed by Oldroyd in 1958 (\cite{Oldroyd_1958}).

     As pointed out by Bhave, Armstrong and Brown (\cite{Bhave_Armstrong_Brown_1991}), the diffusion coefficient $\mu$ is significantly smaller than other effects, the diffusive term $\mu\Delta \tau$ is ignored in the classical Oldroyd-B model (the non-diffusive  model). However, in the recent work by J. M\'{a}lek, V. Prů\v{s}a, T. Sk\v{r}ivan and E. S\"{u}li (\cite{Malek_etal_2018}), the authors showed that the stress
     diffusion term can be interpreted either as a consequence of a nonlocal energy storage mechanism or as a consequence of a nonlocal entropy production mechanism.

     The non-diffusive Oldroyd-B model (i.e.,  \eqref{system} with $\mu=0$) has been well studied by many authors in  the field of mathematics and physics. For the existence results of strong solution, one can refer to the early works by  Guillop\'e and Saut (\cite{Guillop-1}),   Fern\'andez-Cara, Guillen and Ortega (\cite{Fernandez-Guillpen-Ortega}), and by Molinet-Talhouk (\cite{Molinet-Talhouk}), and the recent works by Hieber, Naito and Shibata (\cite{Hieber-Naito-Shibata}),  Fang, Hieber and Zi (\cite{Fang-Hieber-Zi}), and by Zi, Fang and Zhang (\cite{Fang-Zhang-Zi}).
     Lions and Masmoudi (\cite{Lions_Masmoudi_2000}) obtained a global existence result for weak solutions of the corotational  model ($b=0$). For the results about blow-up criteria, one can refer to the works by  Chemin, and Masmoudi (\cite{Chemin_Masmoudi_2001}), Lei, Masmoudi, and Zhou (\cite{Lei_Masmoudi_Zhou_2010}), and by Sun and Zhang (\cite{Sun_Zhang_2011}). The large time behaviors of the solutions are investigated by Hieber, Wen and Zi (\cite{Hieber_Wen_Zi_2019}) and by Huang, Wang, Wen and Zi (\cite{HWWZ_2022}).
     For a detailed review of those works, one can refer for instance to the Introduction in \cite{HWWZ_2022},  the review papers by Lin (\cite{Lin_2012}) and by Renardy and Thomases  (\cite{Renardy_Thomases_2021}), and the references therein. 
     
     However, there are only a few works on the mathematical theory of the diffusive Oldroyd-B model (i.e., (\eqref{system} with $\mu>0$). In the case $\epsilon,\mu>0,$ Constantin and Kliegl (\cite{Constantin_Kliegl_2012}) obtained the global existence and uniqueness of strong solutions in $\mathbb{R}^2.$ In the case $\epsilon = 0,\mu>0,$ Elgindi and Rousset (\cite{Elgindi_Rousset_2015}) obtained a global wellposedness result provided that the initial data are small in $H^s(\mathbb{R}^2)(s>2)$. This result was extended to 3-D case with small initial data in $\mathbb{R}^3$ by Elgindi and Liu (\cite{Elgindi_Liu_2015}).  Recently, Huang, Wang, Wen and Zi (\cite{HWWZ_2022}) proved the global wellposedness to the Cauchy problem in 3-D and deduced some time decay estimates. Liu, Wang and Wen (\cite{Liu_Wang_Wen_2022}) proved a similar results for the  compressible counterpart of this model.  Here, we restate the result of \cite{HWWZ_2022} in the following Proposition. 
    \begin{proposition}[\cite{HWWZ_2022}]  \label{proposition_HWWZ} Assume that $(u_0,\tau_0)\in H^3(\mathbb{R}^3)$. For any given $\epsilon\geq 0$ and $\mu>0$, there exists a sufficiently small constant 
        $0<\varepsilon_0\leq  1 $  satisfying  \eqref{eq_varepsilon}  such that the Cauchy problem (\ref{system}) admits a unique global solution $(u,\tau)\in C([0,+\infty);H^3(\mathbb{R}^3))$ satisfying
        \begin{equation}\label{eq_smallness} 
            \setlength{\abovedisplayskip}{3pt} 
            \setlength{\belowdisplayskip}{3pt}
              { \|u(t)\|_{H^3}^2 + \|\tau(t)\|_{H^3}^2}
              +\int_0^t\Big(\epsilon\|\nabla u (s)\|_{H^3}^2+\|\nabla u (s)\|_{H^2}^2 + \mu\|\nabla\tau (s)\|_{H^3}^2+\|\tau (s)\|_{H^3}^2\Big)ds\leq C_1\varepsilon_0^2,  
        \end{equation} for $t\geq0$, provided that $ 
            \|u_0\|_{H^3(\mathbb{R}^3)}+\|\tau_0\|_{H^3(\mathbb{R}^3)}\leq \varepsilon_0,$
          where $\varepsilon_0$ is a constant   independent of $\epsilon$ and $t$, and the positive constant $C_1$ may depend on $\mu$   but independent of $\epsilon$  and $t$.

          Assume further that $(u_0,\tau_0)\in L^1(\mathbb{R}^3)$. Then the following upper time-decay estimates   hold:
          \begin{equation}\label{eq_1} 
             \|\nabla^ku (t)\|_{L^2}\leq C_2(1+t)^{-\frac34-\frac{k}{2}},\, 
             \|\nabla^{k_1}\tau (t)\|_{L^2}\leq C_2(1+t)^{-\frac54-\frac{k_1}{2}},
          \end{equation}
              and 
             \begin{equation} \label{eq_2} 
             \,\|\nabla^3 u   (t) \|_{L^2} +  \|\nabla^2 \tau  (t) \|_{H^1} \leq C_2 (1+t)^{-\frac{7}{4}},
            \end{equation}
            for any $t\geq0$, where
            $k=0,1,2,\,k_1=0,1,$ and the constant $C_2$ depends only on $\|(u_0,\tau_0)\|_{H^3\cap L^1}$ and $C_1$. 
    \end{proposition}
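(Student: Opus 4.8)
The plan is to establish the three assertions of the Proposition — global well-posedness, the energy/dissipation bound \eqref{eq_smallness}, and the decay rates \eqref{eq_1}--\eqref{eq_2} — in that order, treating the delicate zero-viscosity case $\epsilon=0$ throughout so that every constant is uniform in $\epsilon\in[0,1]$. A local-in-time solution in $C([0,T];H^3)$ is obtained by a routine linearization-and-iteration scheme, so the substance is the global a priori estimate. The difficulty is that when $\epsilon=0$ the velocity equation carries \emph{no} direct dissipation; the only genuinely dissipative quantities are $\mu\|\nabla\tau\|^2+\beta\|\tau\|^2$ from the $\tau$-equation. To manufacture an effective dissipation on $u$ I would exploit the linear coupling $\kappa\,\mathrm{div}\,\tau\leftrightarrow\alpha\,\mathbb{D}u$: after applying $\nabla^k$ ($0\le k\le 3$) and performing the basic $L^2$ energy estimates, I would add a small multiple of the interaction functional $\int\nabla^k\mathbb{D}u:\nabla^k\tau\,dx$, whose time derivative contains the favorable term $\alpha\|\nabla^k\mathbb{D}u\|^2\sim\alpha\|\nabla^{k+1}u\|^2$ (since $\mathbb{D}u$ is first order and $\mathrm{div}\,u=0$), at the price of terms absorbable by the $\tau$-dissipation.

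Combining these, I would build an equivalent energy $\mathcal{E}(t)\sim\|u\|_{H^3}^2+\|\tau\|_{H^3}^2$ together with a dissipation $\mathcal{D}(t)\sim\epsilon\|\nabla u\|_{H^3}^2+\|\nabla u\|_{H^2}^2+\mu\|\nabla\tau\|_{H^3}^2+\|\tau\|_{H^3}^2$ obeying $\frac{d}{dt}\mathcal{E}+\mathcal{D}\lesssim\sqrt{\mathcal{E}}\,\mathcal{D}$, the right-hand side collecting all quadratic nonlinearities $u\cdot\nabla u$, $u\cdot\nabla\tau$, $Q(\nabla u,\tau)$ after integration by parts and Moser-type product estimates in $H^3$. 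Under the smallness hypothesis $\sqrt{\mathcal{E}}\le C\varepsilon_0\ll1$ the nonlinear term is absorbed, giving $\frac{d}{dt}\mathcal{E}+\tfrac12\mathcal{D}\le0$; integrating yields \eqref{eq_smallness}, and a standard continuity argument promotes the local solution to a global one.

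For the decay estimates I would analyze the linearized semigroup $e^{t\mathcal{L}}$ in Fourier variables and split the solution into low- ($|\xi|\le r_0$) and high-frequency ($|\xi|\ge r_0$) parts. On the high-frequency set the full dissipation forces pointwise exponential decay $e^{-ct}$, so those pieces are harmless. On the low-frequency set the symbol of $\mathcal{L}$ has a heat-like branch with eigenvalues $\sim-c|\xi|^2$, and the $L^1$ data give $|\widehat{u_0}|,|\widehat{\tau_0}|\lesssim1$ near $\xi=0$; this produces the base rate $\|\nabla^k u\|_{L^2}\lesssim(1+t)^{-3/4-k/2}$. The extra power enjoyed by $\tau$ in \eqref{eq_1} reflects that, near $\xi=0$, the coupling forces $\widehat\tau\sim|\xi|\,\widehat u$ at leading order, so $\tau$ inherits one additional factor $|\xi|$ and decays like $(1+t)^{-5/4-k_1/2}$. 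I would then close the nonlinear problem by Duhamel's formula and a time-weighted bootstrap, estimating the quadratic source terms through the global $H^3$ bound and the already-established lower-order rates and showing them to be subdominant.

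The \emph{main obstacle} — and the reason \eqref{eq_2} halts at the non-optimal exponent $(1+t)^{-7/4}$ rather than the $(1+t)^{-9/4}$ predicted by the scaling $(1+t)^{-3/4-k/2}$ at $k=3$ — lies in the top-order estimate: the interaction functional that generates velocity dissipation loses one derivative relative to the natural $\tau$-dissipation, and $u$ has no parabolic smoothing when $\epsilon=0$, so a pure energy–Duhamel argument cannot recover the full gain at the highest level. Capturing that missing half-power for $\nabla^3u$ and $\nabla^2\tau$ in $H^1$ is precisely the open problem this paper resolves through a sharper low-/high-frequency treatment; for the present Proposition it suffices to obtain the weaker exponent $-7/4$, which the top-order time-weighted energy estimate yields once the lower-order rates of \eqref{eq_1} are fed into the quadratic source terms.
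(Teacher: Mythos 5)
The paper does not prove this Proposition itself: it is quoted from \cite{HWWZ_2022} (the paper points to Theorems 1.1 and 1.2 there), and the proof in that reference proceeds essentially along the lines you sketch --- an $H^3$ energy method in which velocity dissipation for $\epsilon=0$ is manufactured from the $\kappa\,\mathrm{div}\,\tau\,/\,\alpha\mathbb{D}u$ coupling via a small compensating cross functional (there written in terms of $\sigma=\Lambda^{-1}\mathbb{P}\,\mathrm{div}\,\tau$, which in addition removes the pressure contribution your functional $\int\nabla^k\mathbb{D}u:\nabla^k\tau\,dx$ would force you to estimate by elliptic theory), combined with Fourier analysis of the linearized $(u,\sigma)$ system and a low/high-frequency Duhamel argument, i.e., precisely the ingredients this paper imports as Lemmas \ref{u_sigma_Fourier}, \ref{theorem_Green} and \ref{low_feq_est}. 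Your sketch is therefore consistent with the cited proof, including the correct heuristic for the extra half-power of decay of $\tau$ and for why the top-order rate in \eqref{eq_2} saturates at $(1+t)^{-7/4}$.
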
 
    \begin{remark}
        For readers' convenience,  we remark that estimates in \eqref{eq_1} are partial conclusions of Theorem 1.2 in \cite{HWWZ_2022} (see, \cite{HWWZ_2022}, Theorem 1.2, part (i)). And \eqref{eq_2} is a byproduct of the proof
        of \eqref{eq_1} which was stated in  Lemma 4.5  of \cite{HWWZ_2022} (see (4.15) in page 478 of \cite{HWWZ_2022}).
    \end{remark} 
    \begin{remark}
       For a proof of Proposition \ref{proposition_HWWZ}, see Theorems 1.1 and 1.2 in \cite{HWWZ_2022}. The assumption \eqref{eq_varepsilon} for $\varepsilon_0$ is also assumed in \cite{HWWZ_2022} via a qualitative  statement   ``$\varepsilon_0$ is sufficiently small constant''. Here, we give the quantitative assumption  \eqref{eq_varepsilon} is  in order to clarify the proof, see the proof of Lemma \ref{lemma_decay_u_3} for details. 
    \end{remark} 
    \begin{remark}
        Besides the the results stated in Proposition \ref{proposition_HWWZ},   the authors also proved a similar result for the non-diffusive model in \cite{HWWZ_2022}. From the proofs in \cite{HWWZ_2022}, one can find that the non-diffusive model ($\epsilon>0,\mu\geq 0$) is much easier to handle than the  diffusive model ($\epsilon\geq 0,\mu > 0$). Therefore, in the present paper, we only deal with the diffusive model.
    \end{remark}
    \begin{remark}
         In the previous works (e.g. \cite{HWWZ_2022},\cite{Liu_Wang_Wen_2022} and the references therein), the optimal time decay estimate for the highest-order spatial
     derivatives of $\tau$ is not obtained for both \eqref{system} and the compressible system.
     \end{remark}
\subsection{Main result}
The main result of the present paper is to investigate the 
 optimal decay estimate for the
highest-order spatial derivatives of the solution obtained in Proposition \ref{proposition_HWWZ}. 
\begin{theorem}\label{thm_more_decay}Under the assumptions of Proposition \ref{proposition_HWWZ}, 
     for any given $\epsilon\geq 0$ and $\mu>0$,  the solution $(u,\tau)$ of problem \eqref{system} enjoys the following further optimal decay estimates 
\begin{eqnarray*}
     \|\nabla^3 u (t)\|_{L^2} + \|\nabla^2 \tau (t)\|_{L^2}\leq C_3(1+t)^{-\frac94},\,\,\, \|\nabla^3 \tau (t)\|_{L^2} \leq C_3(1+t)^{-\frac{11}4}.
\end{eqnarray*}
\end{theorem}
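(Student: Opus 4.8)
The plan is to prove the two rates by a low--high frequency splitting, writing $f=f^{L}+f^{H}$ with $f^{L}=\mathcal{F}^{-1}(\mathbf{1}_{|\xi|\le\eta_{0}}\hat f)$ for a fixed small $\eta_{0}>0$, and to estimate the low- and high-frequency parts of $\nabla^{3}u$, $\nabla^{2}\tau$, $\nabla^{3}\tau$ by two separate mechanisms. The optimal algebraic rate will come entirely from the low-frequency part via Duhamel's formula and the linear decay of the coupled semigroup, while the high-frequency part will be shown to decay exponentially through a top-order energy estimate; summing the two then yields the stated bounds. Since Proposition \ref{proposition_HWWZ} already grants the optimal rates for all lower-order derivatives, these will be used freely in controlling the nonlinear terms.

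For the high-frequency part I would close a top-order energy estimate. Testing the cut-off $\nabla^{3}$-system against $(\nabla^{3}u^{H},\nabla^{3}\tau^{H})$ with weights $(\alpha,\kappa)$, the two coupling terms $\kappa\langle\nabla^{3}\operatorname{div}\tau^{H},\nabla^{3}u^{H}\rangle$ and $\alpha\langle\nabla^{3}\mathbb{D}u^{H},\nabla^{3}\tau^{H}\rangle$ cancel after one integration by parts, leaving the dissipation $2\kappa\beta\|\nabla^{3}\tau^{H}\|_{L^{2}}^{2}+2\kappa\mu\|\nabla^{4}\tau^{H}\|_{L^{2}}^{2}$ but, when $\epsilon=0$, \emph{no} dissipation for $u^{H}$. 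This is precisely the difficulty that left the estimate open. To manufacture a top-order velocity dissipation without invoking $\nabla^{4}u$ (unavailable in $H^{3}$), I would add the interaction functional $J^{H}=\langle\nabla^{2}u^{H},\nabla^{2}\operatorname{div}\tau^{H}\rangle$: using $\operatorname{div}\mathbb{D}u=\tfrac12\Delta u$ for divergence-free $u$, the term $\alpha\mathbb{D}u$ in the $\tau$-equation produces $\tfrac{d}{dt}J^{H}\supset\alpha\langle\nabla^{2}u^{H},\nabla^{2}\operatorname{div}\mathbb{D}u^{H}\rangle=-\tfrac{\alpha}{2}\|\nabla^{3}u^{H}\|_{L^{2}}^{2}$, a genuine damping involving only $\nabla^{3}u$. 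On $|\xi|\ge\eta_{0}$ one has $\|\nabla^{2}u^{H}\|\le\eta_{0}^{-1}\|\nabla^{3}u^{H}\|$, so $J^{H}$ is subordinate to the $\nabla^{3}$ energy and $\mathcal{L}^{H}:=\alpha\|\nabla^{3}u^{H}\|_{L^{2}}^{2}+\kappa\|\nabla^{3}\tau^{H}\|_{L^{2}}^{2}+\eta J^{H}\sim\|\nabla^{3}u^{H}\|_{L^{2}}^{2}+\|\nabla^{3}\tau^{H}\|_{L^{2}}^{2}$ for small $\eta$. The remaining linear terms in $\frac{d}{dt}J^{H}$ are either of $\|\nabla^{3}\tau^{H}\|^{2}$-type or the cross term $\mu\langle\nabla^{3}u^{H},\nabla^{4}\tau^{H}\rangle$, absorbed by Young's inequality into the $u$-damping and the $\mu$-dissipation; the only delicate nonlinearity, the piece of $\nabla^{3}Q(\nabla u,\tau)$ carrying $\nabla^{4}u$, is converted into $\langle\nabla^{3}u,\tau\nabla^{4}\tau\rangle$ by integrating by parts onto the parabolically smooth stress $\tau$, and is then absorbed by smallness. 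This yields $\frac{d}{dt}\mathcal{L}^{H}+c_{0}\mathcal{L}^{H}\le0$, hence exponential decay of $\|\nabla^{3}u^{H}\|_{L^{2}}+\|\nabla^{3}\tau^{H}\|_{L^{2}}$; the bound for $\nabla^{2}\tau^{H}$ follows from the easier lower-order estimate, which already carries the $\beta$-damping.

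For the low-frequency part I would use Duhamel's formula $U(t)=e^{tA}U_{0}+\int_{0}^{t}e^{(t-s)A}N(s)\,ds$ for the coupled variable $U=(u,\tau)$. On $|\xi|\le\eta_{0}$ the velocity mode is heat-like, $|\xi|^{3}e^{-\nu|\xi|^{2}t}$, while the stress carries an extra factor $|\xi|$ from the relaxation $\hat\tau\approx\tfrac{\alpha}{\beta}\widehat{\mathbb{D}u}$; the linear term therefore decays like $(1+t)^{-9/4}$ for $\nabla^{3}u^{L}$ and $\nabla^{2}\tau^{L}$ and like $(1+t)^{-11/4}$ for $\nabla^{3}\tau^{L}$. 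The delicate point is the nonlinear integral, which I would split as $\int_{0}^{t/2}+\int_{t/2}^{t}$. On $[0,t/2]$ I put the derivatives on the kernel and use $\|u\otimes u\|_{L^{1}}\lesssim\|u\|_{L^{2}}^{2}\lesssim(1+s)^{-3/2}$, which gives $(1+t)^{-11/4}$. On $[t/2,t]$, where the kernel provides no decay, I instead put the derivatives on the source and estimate $\|\nabla^{2}(u\otimes u)\|_{L^{2}}$ by the \emph{sharp} Gagliardo--Nirenberg inequality $\|\nabla u\|_{L^{4}}^{2}\lesssim\|\nabla u\|_{L^{2}}^{1/2}\|\nabla^{2}u\|_{L^{2}}^{3/2}\lesssim(1+s)^{-13/4}$, which is time-integrable and produces exactly $(1+t)^{-9/4}$. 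The analogous, easier computation for the $\tau$-slot, where the source $u\cdot\nabla\tau+Q(\nabla u,\tau)$ decays faster and the propagator carries the $\beta$- and $\mu$-damping, gives $(1+t)^{-9/4}$ for $\nabla^{2}\tau^{L}$ and $(1+t)^{-11/4}$ for $\nabla^{3}\tau^{L}$.

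Finally I would combine the two regimes: for each of $\nabla^{3}u$, $\nabla^{2}\tau$, $\nabla^{3}\tau$ the full $L^{2}$ norm is controlled by its low-frequency part (optimal algebraic rate) plus its high-frequency part (exponential), which gives the theorem. The whole scheme is closed by a continuity/bootstrap argument: one posits the target bounds with an undetermined constant, inserts them into the nonlinear terms of both the energy inequality and the Duhamel integral, and verifies that the constant does not grow. The main obstacle, and the only genuinely new ingredient, is the construction of the interaction functional $J^{H}$, which extracts the missing top-order velocity dissipation from the coupling $\alpha\mathbb{D}u$ while respecting the $H^{3}$ regularity ceiling; the remaining work is a careful but routine tracking of frequencies together with the sharp interpolation that makes the $[t/2,t]$ piece of the low-frequency Duhamel integral converge at the optimal rate.
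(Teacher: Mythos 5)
Your structural ideas largely coincide with the paper's: the low--high frequency splitting, the Duhamel/spectral estimate for the low-frequency velocity (the paper's Lemma \ref{low_feq_est}), and above all your interaction functional $J^{H}=\langle\nabla^{2}u^{H},\nabla^{2}\mathrm{div}\,\tau^{H}\rangle$, which — after Plancherel and using that $u$ is solenoidal — is exactly the paper's cross term $\langle\Lambda^{3}u^{h},\Lambda^{2}\sigma^{h}\rangle$ with $\sigma=\Lambda^{-1}\mathbb{P}\mathrm{div}\,\tau$ (working with $\mathbb{P}\mathrm{div}\,\tau$ rather than $\mathrm{div}\,\tau$ also kills the pressure term that your version would otherwise have to estimate). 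The genuine gap is your claim that this closes to $\frac{d}{dt}\mathcal{L}^{H}+c_{0}\mathcal{L}^{H}\le 0$, i.e.\ that the high-frequency part decays \emph{exponentially}. This cannot hold: the high-frequency projection of the nonlinearity contains low--low interactions, e.g.\ $(u^{L}\cdot\nabla u^{L})^{H}\neq 0$, because a product of two functions with frequencies in $\{|\xi|\le\eta_{0}\}$ has frequencies filling $\{|\xi|\le 2\eta_{0}\}$, which overlaps the high band. These terms enter the energy identity \emph{linearly} in the high-frequency unknowns, multiplied by quantities that decay only algebraically; they are genuine sources, not absorbable ``by smallness'' (smallness absorption only works for terms quadratic in the high-frequency norms or dissipations). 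After Young's inequality they leave an algebraically decaying right-hand side, so the high band decays only algebraically, at a faster rate. This is precisely what the paper proves in Lemma \ref{lemma_tau_decay}: $\frac{d}{dt}\tilde{\mathcal{H}}^{h}_{3}+\eta_{2}\tilde{\mathcal{H}}^{h}_{3}\lesssim(1+t)^{-\frac{25}{4}}$, hence $\|\nabla^{3}(u^{h},\tau^{h})\|_{L^{2}}\lesssim(1+t)^{-\frac{25}{8}}$ — fast enough for the theorem, but obtaining the $(1+t)^{-\frac{25}{4}}$ source requires having \emph{already} established $\|\nabla^{3}u\|_{L^{2}}+\|\nabla^{2}\tau\|_{L^{2}}+\|\nabla^{3}\tau\|_{L^{2}}\lesssim(1+t)^{-\frac{9}{4}}$ (Lemma \ref{lemma_decay_u_3}) and $\|\nabla^{4}u^{\tilde{\ell}}\|_{L^{2}}\lesssim(1+t)^{-\frac{11}{4}}$ (via Lemma \ref{low_feq_est} with $k=4$), i.e.\ a sequential bootstrap that your one-shot exponential claim shortcuts. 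Once exponential decay is gone, your final assembly for $\nabla^{3}\tau$ (optimal low part plus exponentially small high part) collapses and must be replaced by an argument of the paper's type.

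A second, smaller gap: the stress slot of your low-frequency Duhamel step is not ``analogous, easier''. The relaxation part of the stress propagator (the $e^{-\beta t/2}$ piece of $\mathcal{G}_{2}$ in Lemma \ref{theorem_Green}) gains no powers of $|\xi|$, so pairing it with the natural $L^{1}$ bound $\|u\cdot\nabla\tau\|_{L^{1}}+\|Q(\nabla u,\tau)\|_{L^{1}}\lesssim\|u\|_{L^{2}}\|\nabla\tau\|_{L^{2}}+\|\nabla u\|_{L^{2}}\|\tau\|_{L^{2}}\lesssim(1+s)^{-\frac{5}{2}}$ yields only $(1+t)^{-\frac{5}{2}}$, short of the required $(1+t)^{-\frac{11}{4}}$; moreover the spectral lemma available from the cited work (Lemma \ref{low_feq_est}) is stated for $u$ only, so a stress analogue would have to be proved. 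The paper avoids Duhamel for $\tau$ altogether: it extracts the $\nabla^{2}\tau$ and $\nabla^{3}\tau$ rates from energy estimates on the damped $\tau$-equation, splitting the coupling $\alpha\langle\nabla^{3}\mathbb{D}u,\nabla^{3}\tau\rangle$ into $\langle\nabla^{3}\mathbb{D}u^{\ell},\nabla^{3}\tau\rangle$, controlled by $\|\nabla^{4}u^{\ell}\|_{L^{2}}\lesssim(1+t)^{-\frac{11}{4}}$ and the $\beta$-damping, and $-\langle\nabla^{3}u^{h},\nabla^{3}\mathrm{div}\,\tau\rangle$, controlled by the high-frequency rate $(1+t)^{-\frac{25}{8}}$ and the $\mu$-dissipation. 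If you repair your high-frequency step as described above, this energy route is the natural way to finish; keeping your Duhamel route for $\tau$ would instead require a sharper treatment of the relaxation kernel (two derivatives on the source in $L^{2}$ near $s\sim t$) in addition to the missing linear estimate.
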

\begin{remark}
    The conclusion of Theorem \ref{thm_more_decay} is optimal in the sense that the decay estimates for $(u,\tau)$ are the same as the decay estimates for  solution of the linearized system \eqref{u-sigma_linear}. This result gives an positive answer to the open problem asked in the Section 1.2 of  \cite{HWWZ_2022}, refer to page 461 in \cite{HWWZ_2022} for details. 
\end{remark} 
The main ideas of this work are as follows. Motivated by 
Wang and Wen (\cite{Wang_Wen_2022}), we make some delicate estimates to remove the low-frequency part of the energy functional to obtain the optimal decay estimate for $\|\nabla^3 u\|_{L^2},$ see Lemma \ref{lemma_decay_u_3} for details. The main difficulties arise in the decay estimate of $\|\nabla^3 \tau\|_{L^2}$ due  to the fact that the decay rate of $\|\nabla^3 \tau\|_{L^2}$ should be fast than that of $\|\nabla^3 u\|_{L^2}$ as indicated by decay properties of the linearized problem, see Sections 1.2 and 1.3 of \cite{HWWZ_2022} for a detailed illustration. Our 
observation here is that the high-frequency part $\|\nabla^3 (u^h,\tau^h)\|_{L^2}$ decays faster than $\|\nabla^3(u,\tau)\|_{L^2}$. Then, we make  full use of the low-high-frequency decomposition technique to prove our main result, see Lemma \ref{lemma_tau_decay} for details.

The rest of the paper is organized as follows. In Section \ref{section_pre}, we recall some known  results  which will be used in the proof of the main Theorem. In Section \ref{section_proof}, we improve the decay rates of the solution successively and prove Theorem \ref{thm_more_decay}.
\section{Preliminaries}\label{section_pre}
\subsection*{Notations}
In this paper, some standard notations are used. We use $C$ to denote the generic positive constant which may depend on the initial value and some other known constants but independent of time $t$ and the variable parameter $\epsilon$.
We also use $B_i,C_i (i=0,1,2,\cdots)$  to denote the specific constants which are necessary to clarify the proofs. 
 Some other notations are stated as below:

  ``$G\lesssim F$'' means that there exists a positive constant $C$ such that ``$G\leq C F$''. 
      We simply use the notations $\|\cdot\|_{L^p}$ and $ \|\cdot\|_{H^k}$ ($1\leq p \leq \infty,k=1,2\cdots$ )  for the norm of  spaces $L^p(\mathbb{R}^3)$ and $H^k(\mathbb{R}^3).$ {  As usual,} $\langle\cdot,\cdot\rangle$ is the inner-product in $L^2(\mathbb{R}^3)$. 
      For a Banach space $X$, we  write $\|f\|^2_{X} + \|g\|^2_{X}$ as $\|(f,g)\|^2_{X}$. 

      For $f\in L^2(\mathbb{R}^3)$, we denote the Fourier transform of $f$ by $$\hat{f}(\xi):= \mathscr{F}[f](\xi):=(2\pi)^{-\frac{3}{2}}\int_{\mathbb{R}^3}e^{-ix\cdot\xi}f(x)\mathrm{d }x.$$
     Let $\Lambda := \sqrt{-\Delta}$ and $\mathbb{P}$ be the  Leray
    projector which can be  represented via the Fourier transform  as follows
    $$ \Lambda f = \mathscr{F}^{-1}[|\xi|\hat{f}], ~\text{ and }
    (\mathbb{P} u)^j = \mathscr{F}^{-1}\left[\left(\delta_{j,k}-\frac{\xi_j\xi_k}{|\xi|^2}\right) (\hat{u}^k)\right].$$ 
       Let $\phi(\xi)$ be a smooth cut-off function satisfying that 
    \begin{equation*}
        0\leq \phi(x)\leq 1, ~\text{ and }
        \begin{cases}
            \phi(\xi) =1, &\text{ for }|\xi|\leq \frac{R}{2},\\ 
            \phi(\xi) =0, &\text{ for }|\xi|\geq  R,
        \end{cases}
    \end{equation*}  where $R >0$ is a constant defined in Lemma \ref{theorem_Green}.
    Set $$\phi_0(\xi):=\phi(\xi),~\phi_1(\xi):=1-\phi(\xi),~
\tilde{\phi}_0(\xi):=1-\left(1-\phi(\xi)\right)^2,~\tilde{\phi}_1(\xi):={  \phi^2_1(\xi) = }\left(1-\phi(\xi)\right)^2.$$ 
    {  Using the above notations,} for  $f\in L^2(\mathbb{R}^3),$ we can define the  low frequency and high frequency decomposition as follows
    \begin{equation} \label{eq_decom}
        f(x) = f^\ell(x) + f^{h}(x) = f^{\tilde{\ell}}(x) + f^{\tilde{h}}(x),
    \end{equation}
    where 
    \begin{equation*} 
        f^\ell(x) := \phi_0(D)f(x),~f^{h}(x):= \phi_1(D)f(x),~
         f^{\tilde{\ell}}(x):= \tilde{\phi}_0(D)f(x),~f^{\tilde{h}}(x):=\tilde{\phi}_1(D)f (x),
    \end{equation*}
    with the convention that $\phi(D)f(x):= \mathscr{F}^{-1}[\phi\hat{f}](x)$ for smooth $\phi$. Then, we use the followings notations 
    $$ \Lambda^k\phi(D)f(x) = \mathscr{F}^{-1}[|\xi|^k\phi\hat{f}](x).$$
      As a consequence of { Plancherel's} theorem, for $f\in H^3(\mathbb{R}^3)$, we have that 
    \begin{eqnarray*}
        \|\nabla^k f^{ \ell }\|_{L^2}+ \|\nabla^k f^{ h }\|_{L^2} + \|\nabla^k f^{\tilde{\ell}}\|_{L^2}+ \|\nabla^k f^{\tilde{h}}\|_{L^2} \lesssim  \|\nabla^k f \|_{L^2} ,\,\,\text{ for }k=0,1,2,3,
    \end{eqnarray*}
    and
\begin{eqnarray*}
    \|\nabla^k f^{\tilde{h}}\|_{L^2}\lesssim \|\nabla^k f^{h}\|_{L^2} \lesssim \|\nabla^{k+1} f^{h}\|_{L^2} \lesssim \|\nabla^{k+1} f\|_{L^2},\,\,\text{ for }k=0,1,2.
\end{eqnarray*} 
In order to supplement the dissipation of $u,$ similar to the treatment in \cite{HWWZ_2022}, applying the
Leray projector $\mathbb{P}$ and  the operator $\Lambda^{-1}\mathbb{P}{\rm div}$ to $\eqref{system}_1$  and $\eqref{system}_2$, respectively,
and denoting by 
 $$\sigma := \Lambda^{-1}\mathbb{P}{\rm div}\tau  
\text{ with } \left(\hat{\sigma}\right)^j=i\left(\delta_{j,k}-\frac{\xi_j\xi_k}{|\xi|^2}\right)\frac{\xi_l}{|\xi|}\left(\hat{\tau}\right)^{l,k},$$ 
we obtain
\begin{equation} \label{u-sigma}
    \setlength{\abovedisplayskip}{3pt} 
            \setlength{\belowdisplayskip}{3pt}
    \begin{cases}
         \partial_tu-\epsilon\Delta u-\kappa \Lambda\sigma=\mathcal{F}_1,\\
         \partial_t\sigma-\mu\Delta\sigma+\beta\sigma+\frac{\alpha}{2}\Lambda u=\mathcal{F}_2,
    \end{cases}
    \end{equation} where the nonlinear terms are stated as below:
    \begin{equation*} 
           \mathcal{F}_1=-\mathbb{P}\left(u\cdot\nabla u\right),\,\, 
          \mathcal{F}_2=-\Lambda^{-1}\mathbb{P}{\rm div}\left(u\cdot\nabla\tau\right)+\Lambda^{-1}\mathbb{P}{\rm div}Q(\nabla u,\tau). 
    \end{equation*}
    Next, we consider   the linearized system of \eqref{u-sigma}:
    \begin{eqnarray} \label{u-sigma_linear}
    \begin{cases}
    \partial_tu-\epsilon\Delta u-\kappa \Lambda\sigma=0,\\
    \partial_t\sigma-\mu\Delta\sigma+\beta\sigma+\frac{\alpha}{2}\Lambda u=0,\\
    (u,\sigma)(x,0)=(u_0,\sigma_0)(x).
    \end{cases}
    \end{eqnarray}
    For system \eqref{u-sigma_linear}, we have the following Lemmas \ref{u_sigma_Fourier}, \ref{theorem_Green} and \ref{low_feq_est} which are proved in \cite{HWWZ_2022}.
    \begin{lemma}[Lemma  2.1  in \cite{HWWZ_2022}]\label{u_sigma_Fourier}
        Fourier transform of the solution to  system (\ref{u-sigma_linear}) can be solved by
        \begin{align*}
            \hat{u}^{j} =&{}\big(\mathcal{G}_3(\xi,t)-\epsilon|\xi|^2\mathcal{G}_1(\xi,t)\big)\hat{u}^{j}_0 
            +\kappa|\xi|\mathcal{G}_1(\xi,t)\hat{\sigma}^j_0,\\ 
            \hat{\sigma}^j=&{}-\frac{\alpha}{2}|\xi|\mathcal{G}_1(\xi,t)\hat{u}^{j}_0
            +\big(\mathcal{G}_2(t,\xi)+\epsilon|\xi|^2\mathcal{G}_1(\xi,t)\big)\hat{\sigma}^j_0,
        \end{align*} 
        for $j= 1,2,3,$ and 
        \begin{eqnarray*}\label{G}
           \mathcal{G}_1(\xi, t)=\frac{e^{\lambda_+t}-e^{\lambda_-t}}{\lambda_+-\lambda_-}, \ \mathcal{G}_2(\xi, t)=\frac{\lambda_+e^{\lambda_+t}-\lambda_-e^{\lambda_-t}}{\lambda_+-\lambda_-}, \ \mathcal{G}_3(\xi, t)=\frac{\lambda_+e^{\lambda_-t}-\lambda_-e^{\lambda_+t}}{\lambda_+-\lambda_-}.
        \end{eqnarray*}  
        \end{lemma}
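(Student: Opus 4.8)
The plan is to diagonalize the linear problem in Fourier space and solve the resulting time-ODE frequency-by-frequency. First I would apply the Fourier transform to \eqref{u-sigma_linear}. Since $\partial_t$, $\Delta$ and $\Lambda$ are scalar Fourier multipliers acting componentwise ($\Delta\mapsto-|\xi|^2$, $\Lambda\mapsto|\xi|$), the system fully decouples: for each fixed $\xi$ and each index $j=1,2,3$, the pair $(\hat u^j,\hat\sigma^j)$ satisfies the $2\times 2$ autonomous linear system
\[
\frac{d}{dt}\begin{pmatrix}\hat{u}^j \\ \hat{\sigma}^j\end{pmatrix} = A(\xi)\begin{pmatrix}\hat{u}^j \\ \hat{\sigma}^j\end{pmatrix},\qquad A(\xi)=\begin{pmatrix} -\epsilon|\xi|^2 & \kappa|\xi| \\[2pt] -\frac{\alpha}{2}|\xi| & -(\mu|\xi|^2+\beta)\end{pmatrix}.
\]
Hence $(\hat u^j,\hat\sigma^j)^\top = e^{tA(\xi)}(\hat u_0^j,\hat\sigma_0^j)^\top$, and the entire statement reduces to computing this $2\times2$ matrix exponential explicitly.

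Next I would record $\operatorname{tr}A=-(\epsilon+\mu)|\xi|^2-\beta$ and $\det A=\epsilon|\xi|^2(\mu|\xi|^2+\beta)+\frac{\kappa\alpha}{2}|\xi|^2$, so that the eigenvalues $\lambda_\pm(\xi)$ of $A$ are the two roots of $\lambda^2-(\operatorname{tr}A)\lambda+\det A=0$, with $\lambda_++\lambda_-=\operatorname{tr}A$ and $\lambda_+\lambda_-=\det A$. Assuming $\lambda_+\neq\lambda_-$ (the degenerate case follows by taking the limit $\lambda_-\to\lambda_+$, under which $\mathcal{G}_1\to t\,e^{\lambda_+ t}$ and so on), the Cayley--Hamilton theorem lets me write any analytic function of $A$ as a linear combination of $A$ and the identity. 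Applying this to the exponential gives $e^{tA}=\mathcal{G}_1(\xi,t)\,A+\mathcal{G}_3(\xi,t)\,I$, where the coefficients are forced by the scalar interpolation conditions $e^{\lambda_\pm t}=\mathcal{G}_1\lambda_\pm+\mathcal{G}_3$; solving this $2\times2$ linear system for $\mathcal{G}_1,\mathcal{G}_3$ reproduces precisely the two formulas displayed in the lemma.

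Finally I would expand $e^{tA}=\mathcal{G}_1 A+\mathcal{G}_3 I$ entrywise and read off the coefficients of $\hat u_0^j$ and $\hat\sigma_0^j$. The $(1,1)$, $(1,2)$ and $(2,1)$ entries, namely $\mathcal{G}_3-\epsilon|\xi|^2\mathcal{G}_1$, $\kappa|\xi|\mathcal{G}_1$ and $-\frac{\alpha}{2}|\xi|\mathcal{G}_1$, match the claimed expressions immediately. The only step that is not mechanical matching is the lower-right entry: the matrix exponential yields $\mathcal{G}_3-(\mu|\xi|^2+\beta)\mathcal{G}_1$, whereas the lemma states $\mathcal{G}_2+\epsilon|\xi|^2\mathcal{G}_1$. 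I expect this reconciliation to be the main (though still routine) point, and it follows from the algebraic identity $\mathcal{G}_3-\mathcal{G}_2=-(\lambda_++\lambda_-)\mathcal{G}_1=\big((\epsilon+\mu)|\xi|^2+\beta\big)\mathcal{G}_1$, which one verifies directly from the definitions of $\mathcal{G}_1,\mathcal{G}_2,\mathcal{G}_3$ together with $\lambda_++\lambda_-=\operatorname{tr}A$. Substituting this identity converts one form into the other and completes the proof.
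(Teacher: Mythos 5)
Your proposal is correct, and it follows essentially the same route as the source of this lemma (the paper itself states it without proof, quoting Lemma 2.1 of \cite{HWWZ_2022}, where it is obtained exactly this way: Fourier transform, reduction to a $2\times 2$ constant-coefficient ODE in $t$ for each $(\hat u^j,\hat\sigma^j)$, and explicit solution through the eigenvalues $\lambda_\pm$ of $A(\xi)$). Your Cayley--Hamilton packaging $e^{tA}=\mathcal{G}_1 A+\mathcal{G}_3 I$ is a slightly tidier way to organize the computation, and the one genuinely non-mechanical step --- reconciling the $(2,2)$ entry via $\mathcal{G}_3-\mathcal{G}_2=-(\lambda_++\lambda_-)\mathcal{G}_1$ with $\lambda_++\lambda_-=-(\epsilon+\mu)|\xi|^2-\beta$ --- checks out exactly.
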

        \begin{lemma}[Proposition  2.3  and Lemma  4.5  in \cite{HWWZ_2022}] \label{theorem_Green}  There exist positive constants $R=R(\alpha,\kappa,\beta)$, $\theta=\theta(\alpha,\kappa,\beta)$ and $K=K(\alpha,\kappa,\beta)$ such that
            \begin{eqnarray*} 
                 \left|\mathcal{G}_1(\xi, t)\right| + \left|\mathcal{G}_3(\xi,t)\right| \leq Ke^{-\theta|\xi|^2t},\,\,\,\,
                 |\mathcal{G}_2(\xi,t)|
            \leq K\left(|\xi|^2 e^{-\theta|\xi|^2t} + e^{-\frac{\beta t}{2}}\right), \,\,\text{ for any }|\xi|\leq R \text{ and }t>0.
            \end{eqnarray*} 
            \end{lemma}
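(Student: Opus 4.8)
The plan is to reduce the claimed kernel bounds to a spectral analysis of the $2\times2$ Fourier symbol of \eqref{u-sigma_linear} on the low-frequency ball $\{|\xi|\le R\}$. After the Fourier transform each pair $(\hat u^j,\hat\sigma^j)$ solves $\partial_t(\hat u^j,\hat\sigma^j)^\top=A(\xi)(\hat u^j,\hat\sigma^j)^\top$ with
\[
A(\xi)=\begin{pmatrix} -\epsilon|\xi|^2 & \kappa|\xi| \\ -\tfrac{\alpha}{2}|\xi| & -(\mu|\xi|^2+\beta)\end{pmatrix},
\]
so that $\lambda_\pm$ are the roots of $\lambda^2-(\operatorname{tr}A)\lambda+\det A=0$ with $\operatorname{tr}A=-(\epsilon+\mu)|\xi|^2-\beta$ and $\det A=\epsilon\mu|\xi|^4+\epsilon\beta|\xi|^2+\tfrac{\alpha\kappa}{2}|\xi|^2\ge0$. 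Since $\mathcal G_1,\mathcal G_2,\mathcal G_3$ are precisely the quantities built from $\lambda_\pm$ in Lemma \ref{u_sigma_Fourier}, it suffices to control the two eigenvalues and the spectral gap $\lambda_+-\lambda_-$.

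The central computation is to rewrite the discriminant as a completed square,
\[
(\operatorname{tr}A)^2-4\det A=\big((\epsilon-\mu)|\xi|^2-\beta\big)^2-2\alpha\kappa|\xi|^2=:\Delta(\xi).
\]
At $\xi=0$ this equals $\beta^2>0$, so by continuity there is $R>0$ with $\Delta(\xi)\ge\tfrac12\beta^2$ for $|\xi|\le R$; on this ball $\lambda_\pm$ are real and distinct, and the gap is bounded below, $\lambda_+-\lambda_-=\sqrt{\Delta(\xi)}\ge\beta/\sqrt2$. I would then extract the two decay scales. Writing $P:=\beta-(\epsilon-\mu)|\xi|^2>0$ and using $\sqrt{1-x}\le1-\tfrac{x}{2}$ gives $\sqrt{\Delta}\le P-\tfrac{\alpha\kappa|\xi|^2}{P}$, whence after the $\beta$-terms cancel,
\[
\lambda_+\le-\epsilon|\xi|^2-\frac{\alpha\kappa|\xi|^2}{2\big(\beta-(\epsilon-\mu)|\xi|^2\big)}\le-\theta_0|\xi|^2,
\]
with $\theta_0>0$ surviving at $\epsilon=0$ because $\alpha\kappa/\beta>0$; moreover $\lambda_-=\lambda_+-\sqrt{\Delta}\le-\beta/2$. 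Finally, from $\lambda_+\lambda_-=\det A\ge0$ and $|\lambda_-|\ge\beta/2$ the identity $\lambda_+=\det A/\lambda_-$ yields the matching upper bound $|\lambda_+|\le 2\det A/\beta\le C|\xi|^2$, while $|\lambda_-|\le C$.

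With these four facts — gap $\gtrsim1$, $\lambda_+\le-\theta_0|\xi|^2$, $|\lambda_+|\le C|\xi|^2$, and $\lambda_-\le-\beta/2$ — I would substitute directly into the explicit formulas. For $\mathcal G_1$ the numerator is bounded by $e^{-\theta_0|\xi|^2t}+e^{-\beta t/2}$ over a gap $\ge\beta/\sqrt2$; for $\mathcal G_3$ the two terms give $|\lambda_-|e^{\lambda_+t}+|\lambda_+|e^{\lambda_-t}\lesssim e^{-\theta_0|\xi|^2t}+|\xi|^2e^{-\beta t/2}$; and for $\mathcal G_2$ they give $|\lambda_+|e^{\lambda_+t}+|\lambda_-|e^{\lambda_-t}\lesssim|\xi|^2e^{-\theta_0|\xi|^2t}+e^{-\beta t/2}$. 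The last step is the elementary observation that on $|\xi|\le R$, choosing $\theta\le\min\{\theta_0,\beta/(2R^2)\}$ forces $e^{-\beta t/2}\le e^{-\theta|\xi|^2t}$ and $|\xi|^2\le R^2$; this collapses the estimates for $\mathcal G_1$ and $\mathcal G_3$ to $Ke^{-\theta|\xi|^2t}$ and leaves the $e^{-\beta t/2}$ term of $\mathcal G_2$ intact, matching the claimed bounds exactly.

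The main obstacle is uniformity in the viscosity $\epsilon\ge0$, which underlies the zero-viscosity analysis of the paper: I must verify that $R,\theta_0,K$ do not degenerate as $\epsilon\to0$ and that the gap stays bounded below on the whole ball. The rewriting $\Delta=((\epsilon-\mu)|\xi|^2-\beta)^2-2\alpha\kappa|\xi|^2$ makes this transparent, since the only way $\Delta$ can fail to stay near $\beta^2$ is if $(\epsilon-\mu)|\xi|^2$ is not small against $\beta$, and enlarging $\epsilon$ is the sole dangerous direction; fixing $R$ in terms of $\alpha,\kappa,\beta,\mu$ (and an upper bound for $\epsilon$) keeps $\Delta\ge\tfrac12\beta^2$ throughout and in particular avoids the locus $\Delta=0$ entirely, so no removable-singularity analysis of the quotients is needed. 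Because $\epsilon\ge0$ only pushes $\lambda_+$ further negative and enlarges $\det A$, the constants $\theta_0$ and $K$ may be taken independent of $\epsilon$ down to $\epsilon=0$, which is exactly the uniformity required when these kernel bounds are later fed into the nonlinear decay estimates.
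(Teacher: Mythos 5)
Your proposal is correct and follows essentially the same route as the source: the present paper gives no proof of Lemma \ref{theorem_Green} at all, importing it verbatim from Proposition 2.3 and Lemma 4.5 of \cite{HWWZ_2022}, and the argument there is precisely your eigenvalue analysis --- compute $\lambda_\pm$ from the $2\times 2$ symbol, keep the discriminant $\Delta=\big((\epsilon-\mu)|\xi|^2-\beta\big)^2-2\alpha\kappa|\xi|^2$ bounded below by $\beta^2/2$ on $|\xi|\le R$, and feed the gap $\lambda_+-\lambda_-=\sqrt{\Delta}\ge\beta/\sqrt{2}$ together with $\lambda_+\le-\theta_0|\xi|^2$, $|\lambda_+|\lesssim|\xi|^2$, $\lambda_-\le-\beta/2$ into the explicit formulas for $\mathcal{G}_1,\mathcal{G}_2,\mathcal{G}_3$. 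The one caveat you flag honestly is genuine but harmless in context: your $R$ requires $\epsilon$ bounded (when $\epsilon\le\mu$, which contains the zero-viscosity regime this paper is about, your square-completion even yields the $\epsilon,\mu$-independent choice $R=\beta/(2\sqrt{\alpha\kappa})$), covering literally all $\epsilon\ge0$ would force you to add the complex-root regime, where $\operatorname{Re}\lambda_\pm=\tfrac{1}{2}\operatorname{tr}A(\xi)\le-\beta/2$ makes the claimed bounds immediate, and the quoted dependence $\theta=\theta(\alpha,\kappa,\beta)$ is loose bookkeeping in any case, since $\lambda_+\approx-\tfrac{\alpha\kappa}{2}|\xi|^2/(\beta+\mu|\xi|^2)$ shows $\theta$ must in fact depend on $\mu$, exactly as your $\theta_0$ does.
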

            \begin{lemma}[Lemma  4.1  in \cite{HWWZ_2022}]\label{low_feq_est}
                Letting $(u,\sigma)$ be the solution of problem \eqref{u-sigma}, then we have the following time-decay estimates, for the low-frequency part of $u$, 
                \begin{align*}
                    \left(
                    \int_{|\xi|\leq R}
                     |\xi|^{2k}|\hat{u}|^2
                      {\rm d}\xi
                    \right)^\frac{1}{2}
                     \lesssim &\,
                     (1+t)^{-\frac34-\frac{k}{2}}
                       \|(u_0,\tau_0)\|_{L^1} 
                       +
                      \int_0^{\frac{t}{2}}
                      (1+t-s)^{-\frac34-\frac{k}{2}}
                      \left\|\left(\mathcal{F}_1,\mathcal{F}_2\right)^\top(s)\right\|_{L^1}
                      {\rm d}s\\ 
                       & 
                      +
                      \int_{\frac{t}{2}}^t
                      (1+t-s)^{-\frac{k}{2}}
                      \left\|\left(\mathcal{F}_1,\mathcal{F}_2\right)^\top(s)\right\|_{L^2}\mathrm{d}s. 
                \end{align*} 
            \end{lemma}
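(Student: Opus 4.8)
The idea is to solve the inhomogeneous system \eqref{u-sigma} by Duhamel's principle in Fourier variables, using the explicit solution operator of the linear system \eqref{u-sigma_linear} furnished by Lemma \ref{u_sigma_Fourier}, and then to estimate separately the contribution of the initial data and of the nonlinear forcing on the low-frequency ball $\{|\xi|\le R\}$. Concretely, for $j=1,2,3$ I would write
\[
\hat u^j(\xi,t)=\big(\mathcal G_3-\epsilon|\xi|^2\mathcal G_1\big)(\xi,t)\,\hat u_0^j+\kappa|\xi|\mathcal G_1(\xi,t)\,\hat\sigma_0^j+\int_0^t\Big[\big(\mathcal G_3-\epsilon|\xi|^2\mathcal G_1\big)(\xi,t-s)\,\hat{\mathcal F}_1^j(\xi,s)+\kappa|\xi|\mathcal G_1(\xi,t-s)\,\hat{\mathcal F}_2^j(\xi,s)\Big]\,\mathrm{d}s,
\]
which is the variation-of-constants formula attached to Lemma \ref{u_sigma_Fourier}: the homogeneous part is exactly that representation, and under the time-convolution the data $(u_0,\sigma_0)$ are replaced by the forcing $(\mathcal F_1,\mathcal F_2)$. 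Multiplying by $|\xi|^k$, taking the $L^2(|\xi|\le R)$ norm in $\xi$, and applying the triangle inequality reduces the claim to three pieces.

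For the initial-data piece I would invoke Lemma \ref{theorem_Green}: on $|\xi|\le R$ the multipliers $\mathcal G_3-\epsilon|\xi|^2\mathcal G_1$ and $\kappa|\xi|\mathcal G_1$ are each dominated by $Ke^{-\theta|\xi|^2t}$, uniformly in $\epsilon$. Hence, by the pointwise bounds $|\hat u_0(\xi)|\lesssim\|u_0\|_{L^1}$ and $|\hat\sigma_0(\xi)|=|\widehat{\Lambda^{-1}\mathbb P{\rm div}\tau_0}(\xi)|\lesssim|\hat\tau_0(\xi)|\lesssim\|\tau_0\|_{L^1}$ (the symbol of $\Lambda^{-1}\mathbb P{\rm div}$ being homogeneous of degree $0$, hence bounded), this piece is controlled by $\big(\int_{|\xi|\le R}|\xi|^{2k}e^{-2\theta|\xi|^2t}\,\mathrm{d}\xi\big)^{1/2}\|(u_0,\tau_0)\|_{L^1}$. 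A scaling computation of the Gaussian integral over $\mathbb R^3$, together with the boundedness of the region for $t\le1$, gives $\big(\int_{|\xi|\le R}|\xi|^{2k}e^{-2\theta|\xi|^2t}\,\mathrm{d}\xi\big)^{1/2}\lesssim(1+t)^{-\frac34-\frac k2}$, which is the first term on the right-hand side.

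For the forcing I would split $\int_0^t=\int_0^{t/2}+\int_{t/2}^t$. On $[0,t/2]$ one has $t-s\ge t/2$, so I treat the convolution as an $L^1\to L^2$ low-frequency estimate: using $\|\hat{\mathcal F}_i(s)\|_{L^\infty}\le(2\pi)^{-3/2}\|\mathcal F_i(s)\|_{L^1}$ together with the same Gaussian bound at time $t-s$ yields $\int_0^{t/2}(1+t-s)^{-\frac34-\frac k2}\|(\mathcal F_1,\mathcal F_2)^\top(s)\|_{L^1}\,\mathrm{d}s$. On $[t/2,t]$, where $t-s$ is small while $s\ge t/2$, I instead keep everything in $L^2$ via Plancherel and pull the multiplier out in $L^\infty$: since $\sup_{|\xi|\le R}|\xi|^ke^{-\theta|\xi|^2(t-s)}\lesssim(1+t-s)^{-\frac k2}$ (using $|\xi|^k\le R^k$ when $t-s\le1$ and the Gaussian decay when $t-s\ge1$), this contributes $\int_{t/2}^t(1+t-s)^{-\frac k2}\|(\mathcal F_1,\mathcal F_2)^\top(s)\|_{L^2}\,\mathrm{d}s$. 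Summing the three pieces gives exactly the claimed estimate.

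The step requiring the most care, and the main obstacle, is the uniform-in-$\epsilon$ control of the multiplier $\mathcal G_3-\epsilon|\xi|^2\mathcal G_1$ acting on $\hat u_0$ and on $\hat{\mathcal F}_1$: estimating $\mathcal G_3$ and $\epsilon|\xi|^2\mathcal G_1$ separately would leave an uncontrolled factor $\epsilon|\xi|^2$ as $\epsilon\to\infty$. One must instead use that this combination is precisely the $(1,1)$-entry of the solution matrix obtained by Fourier-transforming \eqref{u-sigma_linear}, all of whose entries obey the bound $Ke^{-\theta|\xi|^2t}$ on $|\xi|\le R$ uniformly in $\epsilon\ge0$; this is the content of the Green's-function analysis underlying Lemma \ref{theorem_Green}. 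The only other point needing attention is the elementary dichotomy $t-s\le1$ versus $t-s\ge1$, which upgrades the singular weight $(t-s)^{-k/2}$ to the shifted weight $(1+t-s)^{-k/2}$ on the bounded region $|\xi|\le R$.
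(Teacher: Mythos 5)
Your proposal is correct, but note that the paper itself offers no proof of this statement: it is imported verbatim as Lemma 4.1 of \cite{HWWZ_2022}, and your argument --- the Duhamel representation built from Lemma \ref{u_sigma_Fourier}, the Green's-function bounds of Lemma \ref{theorem_Green} on $\{|\xi|\le R\}$, the $L^1\to L^\infty$ Fourier bound for the data and the $[0,t/2]$ piece, Plancherel with the $L^\infty_\xi$ multiplier bound $\sup_{|\xi|\le R}|\xi|^k e^{-\theta|\xi|^2(t-s)}\lesssim(1+t-s)^{-k/2}$ for the $[t/2,t]$ piece, and the Gaussian scaling computation --- is exactly the argument used in that reference. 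You also correctly isolate the one delicate point, namely that $\mathcal{G}_3-\epsilon|\xi|^2\mathcal{G}_1$ must be bounded as a single entry of the solution matrix (as in the analysis behind Lemma \ref{theorem_Green}) rather than term by term, so that the resulting constant stays independent of $\epsilon$, in keeping with the paper's constant conventions.
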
  
            Next, we recall some  useful inequalities in the following Lemmas.
            \begin{lemma}[\cite{Adams_Fournier_2003,Taylor_2011_3}]\label{lemma_Sobolev}
                Let $f\in H^2(\mathbb{R}^3)$. Then, we have 
                \begin{align*}
                    \|f\|_{L^q} \lesssim&\,   \|f\|_{H^1},~2\leq q\leq 6, \\ 
                    \|f\|_{L^6} \lesssim&\,   \|\nabla f\|_{L^2},\\ 
                    \|f\|_{L^\infty} \lesssim&\,  \|\nabla f\|_{L^2}^{\frac{1}{2}} \|\nabla^2 f\|_{L^2}^{\frac{1}{2}} \lesssim    \|\nabla f\|_{H^1}.
                \end{align*}
            \end{lemma}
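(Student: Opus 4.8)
The three displayed inequalities are classical Sobolev and Gagliardo--Nirenberg estimates on $\mathbb{R}^3$; the plan is to establish them in the order ``second, first, third'', where the second inequality is the critical embedding $\dot H^1(\mathbb{R}^3)\hookrightarrow L^6(\mathbb{R}^3)$, the first follows from it by $L^p$ interpolation, and the third is a multiplicative bound obtained from the $H^2\hookrightarrow L^\infty$ embedding together with a scaling optimization. Every step uses only elementary tools (the fundamental theorem of calculus, the Hölder and Young inequalities, and Plancherel's theorem), in keeping with the references \cite{Adams_Fournier_2003,Taylor_2011_3}.

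For $\|f\|_{L^6}\lesssim\|\nabla f\|_{L^2}$ I would use the Gagliardo--Nirenberg--Sobolev method. Starting from the coordinatewise representation $f(x)=\int_{-\infty}^{x_i}\partial_i f\,dy_i$ and applying the generalized Hölder inequality to the three directional integrals yields the base estimate $\|g\|_{L^{3/2}}\lesssim\|\nabla g\|_{L^1}$. Applying this to $g=|f|^4$, together with $|\nabla g|\lesssim|f|^3|\nabla f|$ and Cauchy--Schwarz, gives $\|f\|_{L^6}^4\lesssim\|f\|_{L^6}^3\,\|\nabla f\|_{L^2}$, from which the claim follows after dividing by $\|f\|_{L^6}^3$ (a density/truncation argument handles the a priori finiteness). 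Equivalently, one may write $f=\Lambda^{-1}\Lambda f$ and invoke the Hardy--Littlewood--Sobolev inequality for the Riesz potential of order one, which maps $L^2(\mathbb{R}^3)$ into $L^6(\mathbb{R}^3)$ since $\tfrac16=\tfrac12-\tfrac13$, so that $\|f\|_{L^6}=\|\Lambda^{-1}\Lambda f\|_{L^6}\lesssim\|\Lambda f\|_{L^2}=\|\nabla f\|_{L^2}$.

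The first inequality then follows by interpolation: for $2\le q\le 6$ pick $\theta\in[0,1]$ with $\tfrac1q=\tfrac{1-\theta}{2}+\tfrac{\theta}{6}$, so Hölder gives $\|f\|_{L^q}\le\|f\|_{L^2}^{1-\theta}\|f\|_{L^6}^{\theta}$; combining this with the bound just proved, $\|f\|_{L^6}\lesssim\|\nabla f\|_{L^2}\le\|f\|_{H^1}$, and with $\|f\|_{L^2}\le\|f\|_{H^1}$ yields $\|f\|_{L^q}\lesssim\|f\|_{H^1}$.

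The third inequality is the only genuinely delicate point, and I expect the main obstacle to be producing the \emph{sharp multiplicative} exponents rather than a mere additive $H^2$ bound. I would begin with Fourier inversion, $\|f\|_{L^\infty}\lesssim\|\hat f\|_{L^1}$, and split by Cauchy--Schwarz against the weight $(|\xi|^2+|\xi|^4)^{1/2}$:
\begin{equation*}
\|\hat f\|_{L^1}\le\left(\int_{\mathbb{R}^3}\frac{d\xi}{|\xi|^2+|\xi|^4}\right)^{1/2}\left(\int_{\mathbb{R}^3}(|\xi|^2+|\xi|^4)\,|\hat f(\xi)|^2\,d\xi\right)^{1/2}.
\end{equation*}
The first factor is finite precisely because the dimension is $3$: near the origin the integrand is $\sim|\xi|^{-2}$ and at infinity $\sim|\xi|^{-4}$, both integrable against $d\xi$ on $\mathbb{R}^3$. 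By Plancherel the second factor is $\lesssim\|\nabla f\|_{L^2}+\|\nabla^2 f\|_{L^2}$, giving the additive bound $\|f\|_{L^\infty}\lesssim\|\nabla f\|_{L^2}+\|\nabla^2 f\|_{L^2}$. To upgrade it, I would apply this to the dilation $f_\lambda(x):=f(\lambda x)$, using $\|f_\lambda\|_{L^\infty}=\|f\|_{L^\infty}$, $\|\nabla f_\lambda\|_{L^2}=\lambda^{-1/2}\|\nabla f\|_{L^2}$ and $\|\nabla^2 f_\lambda\|_{L^2}=\lambda^{1/2}\|\nabla^2 f\|_{L^2}$, and then minimize $\lambda^{-1/2}\|\nabla f\|_{L^2}+\lambda^{1/2}\|\nabla^2 f\|_{L^2}$ over $\lambda>0$; the optimal choice $\lambda=\|\nabla f\|_{L^2}/\|\nabla^2 f\|_{L^2}$ produces exactly $\|f\|_{L^\infty}\lesssim\|\nabla f\|_{L^2}^{1/2}\|\nabla^2 f\|_{L^2}^{1/2}$. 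Finally, Young's inequality $(ab)^{1/2}\le\tfrac{1}{\sqrt2}(a^2+b^2)^{1/2}$ with $a=\|\nabla f\|_{L^2}$ and $b=\|\nabla^2 f\|_{L^2}$ gives $\|\nabla f\|_{L^2}^{1/2}\|\nabla^2 f\|_{L^2}^{1/2}\lesssim\|\nabla f\|_{H^1}$, which closes the stated chain.
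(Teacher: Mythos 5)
Your proposal is correct in all three parts, and the exponents all check out: the Gagliardo--Nirenberg--Sobolev iteration with $g=|f|^4$ does give $\|f\|_{L^6}^4\lesssim\|f\|_{L^6}^3\|\nabla f\|_{L^2}$ (with the truncation caveat you note), the H\"older interpolation $\tfrac1q=\tfrac{1-\theta}{2}+\tfrac{\theta}{6}$ handles $2\le q\le 6$, and the Fourier splitting against $(|\xi|^2+|\xi|^4)^{1/2}$ followed by optimizing over the dilation $f_\lambda(x)=f(\lambda x)$ (where indeed $\|\nabla f_\lambda\|_{L^2}=\lambda^{-1/2}\|\nabla f\|_{L^2}$ and $\|\nabla^2 f_\lambda\|_{L^2}=\lambda^{1/2}\|\nabla^2 f\|_{L^2}$ in dimension $3$) produces exactly the multiplicative bound $\|f\|_{L^\infty}\lesssim\|\nabla f\|_{L^2}^{1/2}\|\nabla^2 f\|_{L^2}^{1/2}$. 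For comparison: the paper gives no proof at all --- it imports the lemma as a classical fact from \cite{Adams_Fournier_2003,Taylor_2011_3}, so your write-up is strictly more self-contained than the source; your arguments are the standard textbook ones found there (Sobolev embedding, $L^p$ interpolation, and the Gagliardo--Nirenberg inequality, the last of which is often instead proved via the additive bound applied on dyadic frequency blocks, equivalent to your scaling trick). Two cosmetic points only: in the scaling optimization you should dispose of the degenerate cases $\|\nabla f\|_{L^2}=0$ or $\|\nabla^2 f\|_{L^2}=0$ (either forces $f\equiv 0$ for $f\in H^2(\mathbb{R}^3)$, so the inequality is trivial there), and the final step is cleanest stated as $\|\nabla f\|_{L^2}^{1/2}\|\nabla^2 f\|_{L^2}^{1/2}\le\bigl(\|\nabla f\|_{L^2}^2+\|\nabla^2 f\|_{L^2}^2\bigr)^{1/2}=\|\nabla f\|_{H^1}$ by AM--GM, matching the norm convention used in the paper.
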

            \begin{lemma}[\cite{Majda_Bertozzi_2002}]\label{lemma_commutator}Let $k\geq 1$ be an integer and $f,g \in L^\infty(\mathbb{R}^3)\cap H^k({\mathbb{R}^3})$, it holds that
                 \begin{equation*}
                    \|\nabla^k(fg)\|_{L^2} \lesssim \|f\|_{L^\infty}\|\nabla^k g\|_{L^2} + \|\nabla^k f\|_{L^2}\|  g\|_{L^\infty},
                 \end{equation*}
                 and 
                 \begin{equation*}
                    \|\nabla^k(fg) - f\nabla^k g\|_{L^2}
                    \lesssim
                    \|\nabla f\|_{L^\infty}\|\nabla^{k-1} g\|_{L^2} + \|\nabla^k f\|_{L^2}\|  g\|_{L^\infty}.
                 \end{equation*}
            \end{lemma}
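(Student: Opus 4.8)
The statement packages two classical Moser-type inequalities: a fractional-Leibniz product bound and a Kato--Ponce commutator bound. My plan for both is the same three-ingredient recipe: expand by the Leibniz rule, split each resulting term by H\"older's inequality, and control each factor by Gagliardo--Nirenberg interpolation between an $L^\infty$ norm and a top-order $L^2$ norm. The entire substance of the proof lies in choosing the H\"older and interpolation exponents so that the scalings match and the final products collapse. I will treat each inequality in turn.

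For the product estimate, the Leibniz rule writes $\nabla^k(fg)$ as a finite sum of terms $\nabla^{k-j}f\,\nabla^{j}g$ with $0\le j\le k$. For each such term I would apply H\"older with the conjugate split $\tfrac1{p_j}+\tfrac1{q_j}=\tfrac12$, $p_j=\tfrac{2k}{k-j}$, $q_j=\tfrac{2k}{j}$, and then estimate each factor by the Gagliardo--Nirenberg inequality on $\mathbb{R}^3$:
\[
\|\nabla^{k-j}f\|_{L^{p_j}}\lesssim \|\nabla^k f\|_{L^2}^{(k-j)/k}\|f\|_{L^\infty}^{j/k},\qquad
\|\nabla^{j}g\|_{L^{q_j}}\lesssim \|\nabla^k g\|_{L^2}^{j/k}\|g\|_{L^\infty}^{(k-j)/k}.
\]
These are exactly the admissible interpolants whose scaling forces the above H\"older exponents, and their weights on $f$ and $g$ are complementary. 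Multiplying, each term is bounded by $A^{(k-j)/k}B^{j/k}$ with $A:=\|\nabla^k f\|_{L^2}\|g\|_{L^\infty}$ and $B:=\|f\|_{L^\infty}\|\nabla^k g\|_{L^2}$; since the two exponents sum to one, the weighted AM--GM (Young) inequality gives $A^{(k-j)/k}B^{j/k}\le A+B$. Summing over $j$ yields the first bound.

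For the commutator estimate, the point is that $\nabla^k(fg)-f\nabla^k g$ is the \emph{same} Leibniz sum with only the $j=k$ term (namely $f\nabla^k g$) deleted, so the index now runs over $0\le j\le k-1$ and every surviving term carries at least one derivative on $f$. I would therefore set $F:=\nabla f$ and interpolate $\nabla^{k-j}f=\nabla^{k-j-1}F$ between $F\in L^\infty$ and $\nabla^{k-1}F=\nabla^k f\in L^2$, while interpolating $\nabla^{j}g$ between $g\in L^\infty$ and $\nabla^{k-1}g\in L^2$. The matching exponents are $p_j=\tfrac{2(k-1)}{k-j-1}$ and $q_j=\tfrac{2(k-1)}{j}$, again conjugate to $\tfrac12$, and each term is then bounded by $A^{(k-j-1)/(k-1)}B^{j/(k-1)}$, now with $A:=\|\nabla^k f\|_{L^2}\|g\|_{L^\infty}$ and $B:=\|\nabla f\|_{L^\infty}\|\nabla^{k-1}g\|_{L^2}$. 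The two exponents again sum to one, so Young's inequality gives the bound $A+B$, and summing over $j$ produces the claimed commutator estimate.

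The step I expect to be the main obstacle is not any hard analysis but the simultaneous bookkeeping of exponents: one must check at once that (i) the two H\"older exponents are conjugate to $1/2$, and (ii) each factor's interpolation is admissible under the Gagliardo--Nirenberg scaling identity on $\mathbb{R}^3$ with interpolation weights summing to $1$, this last fact being precisely what makes every product collapse to $A+B$ rather than a messier expression. I would also dispatch the endpoint indices separately: when $j=0$ or $j=k$ (respectively $j=0$ or $j=k-1$) one Lebesgue exponent becomes $\infty$, and the corresponding factor is estimated directly by $\|\nabla^k f\|_{L^2}\|g\|_{L^\infty}$ or $\|f\|_{L^\infty}\|\nabla^k g\|_{L^2}$ (resp.\ by $B$) without invoking interpolation. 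Finally, the case $k=1$ is degenerate for the interpolation in the second inequality but trivial on its own, since there the commutator is simply $g\,\nabla f$ and both bounds reduce to H\"older's inequality.
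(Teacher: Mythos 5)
Your proof is correct: the paper itself gives no proof of this lemma, citing \cite{Majda_Bertozzi_2002} instead, and your argument (Leibniz expansion, H\"older with the conjugate exponents $\tfrac{2k}{k-j}$, $\tfrac{2k}{j}$, Gagliardo--Nirenberg interpolation between $L^\infty$ and the top-order $L^2$ norm, then Young's inequality, with the commutator handled by deleting the $j=k$ term and interpolating $\nabla f$) is precisely the standard proof in that reference (Majda--Bertozzi, Lemma 3.4). Your exponent bookkeeping, endpoint cases, and the degenerate case $k=1$ all check out, so nothing further is needed.
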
 
            \section{Proof of Theorem \ref{thm_more_decay}}\label{section_proof} 
            Noting that 
            \begin{equation*}
                \setlength{\abovedisplayskip}{1pt} 
            \setlength{\belowdisplayskip}{3pt}
                \langle |\Lambda^3 u^h,\Lambda^2\sigma^h \rangle| \leq
                \|\Lambda^3 u^h\|_{L^2}\|\Lambda^2\sigma^h\|_{L^2}\leq  B_0 \Big(\|\Lambda^3 u^h\|_{L^2}^2 + \|\Lambda^2\sigma^h\|_{L^2}^2\Big),
            \end{equation*}
            define the temporal energy functional that 
            \begin{eqnarray*}
                \mathcal{H}_3(t):= \alpha\|\nabla^3u \|_{L^2}^2+\kappa\|\nabla^3\tau\|_{L^2}^2,~~\tilde{\mathcal{H}}_3(t):=\alpha\|\nabla^3u\|_{L^2}^2+\kappa\|\nabla^3\tau\|_{L^2}^2 + \eta_1 \langle \Lambda^3 u^h,\Lambda^2\sigma^h \rangle,\,\,\,\,\,\\ 
                \mathcal{H}^h_3(t) :=\alpha\|\nabla^3u^h\|_{L^2}^2+\kappa\|\nabla^3\tau^h\|_{L^2}^2,~~\tilde{\mathcal{H}}^h_3(t):=\alpha\|\nabla^3u^h\|_{L^2}^2+\kappa\|\nabla^3\tau^h\|_{L^2}^2 + \eta_1 \langle \Lambda^3 u^h,\Lambda^2\sigma^h \rangle,
            \end{eqnarray*}  
            where $0<\eta_1\leq \frac{1}{2B_0}\min\{\alpha,\kappa\}$ is a constant defined in the proof of Lemma \ref{lemma_decay_u_3} to ensure that 
             $$\frac{1}{2}\mathcal{H}_3(t) \leq \tilde{\mathcal{H}}_3(t)  \leq 2\mathcal{H}_3(t),~  \text{ and } ~ \frac{1}{2}\mathcal{H}^h_3(t) \leq \tilde{\mathcal{H}}^h_3(t)  \leq 2\mathcal{H}^h_3(t).$$  The proof of Theorem \ref{thm_more_decay} consists of Lemmas \ref{lemma_decay_u_3}  and  \ref{lemma_tau_decay}. To begin with, we have following optimal decay estimate for $\|\nabla^3 u\|_{L^2}$ and $\|\nabla^2 \tau\|_{L^2}$ which is not obtained in \cite{HWWZ_2022}. A similar result of Lemma \ref{lemma_decay_u_3} for the compressible model was proved in \cite{Liu_Wang_Wen_2022}.

            \begin{lemma}\label{lemma_decay_u_3}
                Under the assumptions of Theorem \ref{thm_more_decay}, it holds that 
                \begin{eqnarray*}
                    \|\nabla^3 u  (t) \|_{L^2} + \|\nabla^2 \tau   (t) \|_{L^2} + \|\nabla^3 \tau  (t) \|_{L^2} \lesssim (1+t)^{-\frac{9}{4}}. 
                \end{eqnarray*}
            \end{lemma}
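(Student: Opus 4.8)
The plan is to separate the three quantities into two groups treated by different mechanisms: the coupled pair $\|\nabla^3 u\|_{L^2},\|\nabla^3\tau\|_{L^2}$ through a frequency-localized energy argument built on $\tilde{\mathcal{H}}_3$, and then $\|\nabla^2\tau\|_{L^2}$ by a standalone estimate that feeds on the velocity bound just obtained. First I would apply $\nabla^3$ to the original system \eqref{system}, pair the velocity equation with $\alpha\nabla^3 u$ and the stress equation with $\kappa\nabla^3\tau$, and add. Using $\mathrm{div}\,u=0$ the pressure drops out, and by the symmetry of $\tau$ the two linear coupling terms $\alpha\kappa\langle\nabla^3\mathrm{div}\,\tau,\nabla^3 u\rangle$ and $\alpha\kappa\langle\nabla^3\mathbb{D}u,\nabla^3\tau\rangle$ cancel, leaving
$$\tfrac{1}{2}\tfrac{d}{dt}\mathcal{H}_3 + \alpha\epsilon\|\nabla^4 u\|_{L^2}^2 + \kappa\mu\|\nabla^4\tau\|_{L^2}^2 + \kappa\beta\|\nabla^3\tau\|_{L^2}^2 = \mathcal{N},$$
where $\mathcal{N}$ collects the transport and $Q$ contributions. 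This supplies damping of $\nabla^3\tau$ (from $\beta$) and of $\nabla^4\tau$ (from $\mu$), but no direct damping of $\nabla^3 u$.

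To manufacture the missing velocity dissipation I would differentiate the cross functional $\langle\Lambda^3 u^h,\Lambda^2\sigma^h\rangle$ along \eqref{u-sigma} (the high-frequency projections solve the same linear system, since every operator involved is a Fourier multiplier). The term $\tfrac{\alpha}{2}\Lambda u$ in the $\sigma$-equation produces $-\tfrac{\alpha}{2}\|\Lambda^3 u^h\|_{L^2}^2$, while $\kappa\Lambda\sigma$ in the $u$-equation produces the unwanted $\kappa\|\Lambda^3\sigma^h\|_{L^2}^2\lesssim\|\nabla^3\tau\|_{L^2}^2$. The remaining $\epsilon,\mu,\beta$ contributions are redistributed using self-adjointness of $\Lambda$ and, crucially on the zone $|\xi|\geq R/2$ where $\|\Lambda^2\sigma^h\|_{L^2}\lesssim\|\Lambda^3\sigma^h\|_{L^2}$, absorbed into $\alpha\epsilon\|\nabla^4 u\|_{L^2}^2$, $\kappa\mu\|\nabla^4\tau\|_{L^2}^2$ and $\kappa\beta\|\nabla^3\tau\|_{L^2}^2$. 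Multiplying this cross identity by a small $\eta_1$ (fixed so that $\tfrac{1}{2}\mathcal{H}_3\leq\tilde{\mathcal{H}}_3\leq2\mathcal{H}_3$) and adding it to the previous one gives
$$\tfrac{d}{dt}\tilde{\mathcal{H}}_3 + c_0\big(\|\nabla^3 u^h\|_{L^2}^2 + \|\nabla^3\tau\|_{L^2}^2 + \epsilon\|\nabla^4 u\|_{L^2}^2 + \mu\|\nabla^4\tau\|_{L^2}^2\big)\lesssim|\mathcal{N}| + |\mathcal{N}^{\mathrm{cr}}|.$$

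The key step is then the removal of the low frequencies. I would add $c_0\|\nabla^3 u^\ell\|_{L^2}^2$ to both sides so that the left side controls the full $\|\nabla^3 u\|_{L^2}^2$; since $\tilde{\mathcal{H}}_3\approx\|\nabla^3 u\|_{L^2}^2+\|\nabla^3\tau\|_{L^2}^2$, this yields $\tfrac{d}{dt}\tilde{\mathcal{H}}_3 + c_1\tilde{\mathcal{H}}_3\lesssim|\mathcal{N}|+|\mathcal{N}^{\mathrm{cr}}|+\|\nabla^3 u^\ell\|_{L^2}^2$. The nonlinear terms are then shown harmless: after one integration by parts to avoid a $\nabla^4 u$ inside the $Q$ term (absorbing the resulting $\|\nabla^4\tau\|_{L^2}$ into the $\mu$-dissipation) and by Lemmas \ref{lemma_Sobolev}--\ref{lemma_commutator}, each term either carries a decaying factor $\|\nabla u\|_{L^\infty}+\|\nabla\tau\|_{L^\infty}+\|u\|_{L^\infty}^2+\|\tau\|_{L^\infty}$ multiplying $\tilde{\mathcal{H}}_3$ (handled by the Gronwall integrating factor via the rates of Proposition \ref{proposition_HWWZ}) or is a genuine source decaying at least like $(1+t)^{-7}$. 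The dominant forcing is $\|\nabla^3 u^\ell\|_{L^2}^2$, which I bound by Lemma \ref{low_feq_est} with $k=3$: the linear part gives $(1+t)^{-9/4}$, and inserting $\|(\mathcal{F}_1,\mathcal{F}_2)\|_{L^1}\lesssim(1+t)^{-2}$ and $\|(\mathcal{F}_1,\mathcal{F}_2)\|_{L^2}\lesssim(1+t)^{-11/4}$, computed from Proposition \ref{proposition_HWWZ}, into the two Duhamel integrals preserves the rate $(1+t)^{-9/4}$. Hence $\|\nabla^3 u^\ell\|_{L^2}^2\lesssim(1+t)^{-9/2}$ and Gronwall gives $\tilde{\mathcal{H}}_3(t)\lesssim(1+t)^{-9/2}$, i.e. $\|\nabla^3 u\|_{L^2}+\|\nabla^3\tau\|_{L^2}\lesssim(1+t)^{-9/4}$.

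Finally, for $\|\nabla^2\tau\|_{L^2}$ I would run a separate $\nabla^2$ energy estimate on the stress equation of \eqref{system}. The damping supplies $\beta\|\nabla^2\tau\|_{L^2}^2$ and the coupling contributes only $\alpha\langle\nabla^2\mathbb{D}u,\nabla^2\tau\rangle\lesssim\|\nabla^3 u\|_{L^2}\|\nabla^2\tau\|_{L^2}$, so Young's inequality leaves $\tfrac{d}{dt}\|\nabla^2\tau\|_{L^2}^2+\beta\|\nabla^2\tau\|_{L^2}^2\lesssim\|\nabla^3 u\|_{L^2}^2+(\text{fast nonlinear})$. With the source $\|\nabla^3 u\|_{L^2}^2\lesssim(1+t)^{-9/2}$ already in hand, the exponential-damping Gronwall gives $\|\nabla^2\tau\|_{L^2}\lesssim(1+t)^{-9/4}$, completing the proof. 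I expect the main obstacle to be the cross-term step: closing the velocity dissipation requires that every linear by-product of differentiating $\langle\Lambda^3 u^h,\Lambda^2\sigma^h\rangle$, especially the $\epsilon$- and $\mu$-diffusion pieces, be absorbed with constants independent of $\epsilon$, which is precisely what the high-frequency localization and the smallness of $\eta_1$ are designed to secure; the nonlinear top-order $Q$ term, controlled by integration by parts against the $\mu$-dissipation, is the other delicate point.
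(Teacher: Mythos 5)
Your proposal follows the paper's proof essentially step for step: the $\nabla^3$ energy identity with the $\alpha$--$\kappa$ cancellation, the high-frequency cross functional $\langle\Lambda^3 u^h,\Lambda^2\sigma^h\rangle$ to manufacture velocity dissipation, the transfer of the low-frequency piece $\|\nabla^3 u^\ell\|_{L^2}^2$ to the right-hand side and its $(1+t)^{-9/2}$ bound via Lemma \ref{low_feq_est} with the rates of Proposition \ref{proposition_HWWZ}, Gronwall's inequality for $\tilde{\mathcal{H}}_3$, and finally the standalone $\nabla^2$ energy estimate on the stress equation yielding $\|\nabla^2\tau\|_{L^2}\lesssim(1+t)^{-9/4}$. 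The only deviations are cosmetic (you absorb $\kappa\|\Lambda^3\sigma^h\|_{L^2}^2$ into the $\beta$-damping where the paper routes it through $\|\Lambda^4\tau^h\|_{L^2}^2$ and the $\mu$-dissipation, and you control the nonlinear remainders by decaying coefficients under Gronwall rather than by $\varepsilon_0$-smallness absorption as in \eqref{eq_varepsilon}), so the proof is correct and essentially the same as the paper's.
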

    \begin{proof}
                To begin with, applying $\nabla^3$ to (\ref{system}), and then multiplying (\ref{system})$_1$ by $\alpha\nabla^3 u$ and (\ref{system})$_2$ by $\kappa\nabla^3 \tau$, we have from integration by parts and the cancellation relation that
            \begin{equation} \label{eq_nabla_3}
                \begin{split}
                  &\frac12\frac{\mathrm{d}}{\mathrm{d}t}\left(\alpha\|\nabla^3u\|_{L^2}^2+\kappa\|\nabla^3\tau\|_{L^2}^2\right)+\alpha\epsilon\|\nabla^4 u\|_{L^2}^2+\kappa\mu\|\nabla^4\tau\|_{L^2}^2+\kappa\beta\|\nabla^3\tau\|_{L^2}^2
                 \\ =&
                 -\alpha \langle\nabla^3(u\cdot\nabla u),\nabla^3 u \rangle-\kappa \langle\nabla^3(u\cdot\nabla \tau),\nabla^3\tau \rangle
                 -\kappa\langle\nabla^2Q(\nabla u,\tau),\nabla^4\tau\rangle=:\sum_{p=1}^3\mathcal{I}_p.
            \end{split}
        \end{equation}
        Using the incompressible condition, { \eqref{eq_smallness},}   H\"older inequality,   Cauchy inequality,   Lemmas \ref{lemma_Sobolev} and  \ref{lemma_commutator},  we can estimate $\mathcal{I}_1,\mathcal{I}_2$ and $\mathcal{I}_3$ as follows.  
        For $\mathcal{I}_1$, we have  
        \begin{align*}
            \mathcal{I}_1 =& -\alpha \langle\nabla^3(u\cdot\nabla u),\nabla^3 u \rangle \\ 
             =& -\alpha  \langle\nabla^3(u\cdot\nabla u)
            -(u\cdot\nabla) \nabla^3 u ,\nabla^3 u \rangle \\ 
            \lesssim& \,\|\nabla^3(u\cdot\nabla u)-(u\cdot\nabla) \nabla^3 u \|_{L^2}\|\nabla^3 u\|_{L^2} \\ 
            \lesssim& \, \Big(\|\nabla u\|_{L^\infty}\|\nabla^3 u \|_{L^2}  + \|\nabla^3 u \|_{L^2}\|\nabla u\|_{L^\infty}\Big)\|\nabla^3 u \|_{L^2} \\ 
            \lesssim&\, \varepsilon_0 \|\nabla^3 u \|_{L^2}^2.
        \end{align*}
        Similarly, for $\mathcal{I}_2$, we have 
        \begin{align*}
            \mathcal{I}_2 =& -\kappa \langle\nabla^3(u\cdot\nabla \tau),\nabla^3\tau \rangle\\ 
            =& -\kappa \langle\nabla^3(u\cdot\nabla \tau)
            -(u\cdot\nabla) \nabla^3 \tau ,\nabla^3 \tau \rangle \\ 
            \lesssim& \,\|\nabla^3(u\cdot\nabla \tau)-(u\cdot\nabla) \nabla^3 \tau \|_{L^2}\|\nabla^3 \tau\|_{L^2}\\ 
            \lesssim& \,\Big(\|\nabla u\|_{L^\infty}\|\nabla^3 \tau \|_{L^2}+\|\nabla \tau\|_{L^\infty}\|\nabla^3 u \|_{L^2}\Big)\|\nabla^3 \tau \|_{L^2}\\ 
            \lesssim&\, \varepsilon_0 \big( \|\nabla^3 u \|_{L^2}^2 + \|\nabla^3 \tau \|_{L^2}^2\Big), 
        \end{align*}
        And, for $\mathcal{I}_3$, we have 
        \begin{align*}
            \mathcal{I}_3 =&\,-\kappa\langle\nabla^2Q(\nabla u,\tau),\nabla^4\tau\rangle\\ 
             \lesssim  &\, \langle | \nabla u ||\nabla^2\tau |,|\nabla^4\tau|\rangle + \langle | \nabla^2 u | |\nabla\tau |,|\nabla^4\tau|\rangle
            +\langle | \nabla^3 u | |\tau |,|\nabla^4\tau|\rangle \\ 
            \lesssim &\, \Big(\|\nabla u\|_{L^3}\|\nabla^2 \tau\|_{L^6} + \|\nabla^2 u\|_{L^6}\|\nabla  \tau\|_{L^3}+ \|\nabla^3 u\|_{L^2}\|  \tau\|_{L^\infty}\Big)\|\nabla^4 \tau \|_{L^2}\\ 
            \leq&\,\frac{\kappa\mu}{2}\|\nabla^4 \tau \|_{L^2}^2 + C\varepsilon_0^2 \Big( \|\nabla^3 u \|_{L^2}^2 + \|\nabla^3 \tau \|_{L^2}^2\Big)
        \end{align*}
        Combining the above three estimate with \eqref{eq_nabla_3}, we obtain 
        \begin{equation} \label{eq_nabla_3_dissipation_tau}
            \begin{split}
              & \frac{\mathrm{d}}{\mathrm{d}t}\left(\alpha\|\nabla^3u\|_{L^2}^2+\kappa\|\nabla^3\tau\|_{L^2}^2\right) +\kappa\mu\|\nabla^4\tau\|_{L^2}^2+\kappa\beta\|\nabla^3\tau\|_{L^2}^2 
             \leq  B_1\varepsilon_0  \Big( \|\nabla^3 u \|_{L^2}^2 + \|\nabla^3 \tau \|_{L^2}^2\Big).
        \end{split}
    \end{equation}
    Next, to supplement the dissipation of $u$,  multiplying  $\Lambda^3\phi_1(D)(\ref{u-sigma})_1$ and $\Lambda^2\phi_1(D)(\ref{u-sigma})_2$  by $\Lambda^2\sigma^h$ and $\Lambda^3u^h$,  respectively, then summing the result up, we have, from  integration by parts, that 
    \begin{eqnarray*}
        \begin{aligned}
            & \frac{\mathrm{d}}{\mathrm{d}t} \langle \Lambda^3 u^h,\Lambda^2\sigma^h \rangle + \frac{\alpha}{2} \|\Lambda^3 u^h\|_{L^2}^2 \\
            = &\,\kappa\|\Lambda^3 \sigma^h\|_{L^2}^2 - (\epsilon+\mu)\langle \Lambda^3 u^h,\Lambda^4 \sigma^h\rangle -\beta \langle \Lambda^3 u^h,\Lambda^2 \sigma^h\rangle
            +\langle \Lambda^2 \mathcal{F}_1^h,\Lambda^3 \sigma^h\rangle
            +\langle \Lambda^3 u^h,\Lambda^2 \mathcal{F}_2^h\rangle \\ 
            \lesssim &\,\Big(\|\Lambda^4 \sigma^h\|_{L^2} + \|\Lambda^3 u^h\|_{L^2}   \Big)\|\Lambda^4 \sigma^h\|_{L^2}
            +   |\langle \Lambda^2 \mathcal{F}_1^h,\Lambda^3 \sigma^h\rangle|+ |\langle \Lambda^3 u^h,\Lambda^2 \mathcal{F}_2^h\rangle|,\\  
        \end{aligned}
    \end{eqnarray*} 
    where the last two terms can be estimated as follows, 
    \begin{align*}
        |\langle \Lambda^2 \mathcal{F}_1^h,\Lambda^3 \sigma^h\rangle| \lesssim &\, \|\Lambda^4 \sigma^h\|_{L^2}
        \Bigl( 
                \|u\|_{L^\infty}\|\nabla^3 u\|_{L^2}
                +\|\nabla u\|_{L^3}\|\nabla^2 u \|_{L^6}
                +\|\nabla^2 u\|_{L^6}\|\nabla u\|_{L^3}
             \Bigr)\\ 
        \lesssim &\, \|\Lambda^4 \sigma^h\|_{L^2} \|\nabla u\|_{H^1}  \|\nabla^3 u\|_{L^2},
    \end{align*}
    and
    \begin{eqnarray*}
        \begin{aligned}
             |\langle \Lambda^3 u^h,\Lambda^2 \mathcal{F}_2^h\rangle|\lesssim & \,\|\Lambda^3 u^h\|_{L^2}\|\Lambda^2 \mathcal{F}_2^h\|_{L^2}\\ 
            \lesssim&\, \|\nabla^3 u \|_{L^2}\Bigl( 
                \|u\|_{L^\infty}\|\nabla^3 \tau\|_{L^2}
                +\|\nabla u\|_{L^3}\|\nabla^2 \tau \|_{L^6}
                +\|\nabla^2 u\|_{L^6}\|\nabla \tau\|_{L^3}
             \Bigr)\\ 
             &+ \|\nabla^3 u \|_{L^2}\Bigl( 
                \|\nabla u\|_{L^3}\|\nabla^2 \tau\|_{L^6}
                +\|\nabla^2 u\|_{L^6}\|\nabla \tau \|_{L^3}
                +\|\nabla^3 u\|_{L^2}\| \tau\|_{L^\infty}
             \Bigr)\\ 
             \lesssim &\, \|\nabla^3 u \|_{L^2} \Bigl(\|\nabla u\|_{H^1}\|\nabla^3 \tau \|_{L^2} + \|\nabla \tau \|_{H^1}\|\nabla^3 u \|_{L^2} \Bigr) \\ 
             \lesssim & \|\nabla(u,\tau)\|_{H^1}^2\Big( \|\nabla^3 u \|_{L^2}^2 + \|\nabla^3 \tau \|_{L^2}^2\Big).
        \end{aligned}
    \end{eqnarray*}
    Combining the above three inequalities, we   obtain that
    \begin{eqnarray*}
        \begin{aligned}
            & \frac{\mathrm{d}}{\mathrm{d}t} \langle \Lambda^3 u^h,\Lambda^2\sigma^h \rangle + \frac{\alpha}{2} \|\Lambda^3 u^h\|_{L^2}^2 \\
            \leq&\,\frac{\alpha}{4}\|\Lambda^3 u^h\|_{L^2}^2 +  C\|\Lambda^4 \tau^h\|_{L^2}^2 + C\|\nabla(u,\tau)\|_{H^1}^2\Big( \|\nabla^3 u \|_{L^2}^2 + \|\nabla^3 \tau \|_{L^2}^2\Big),
        \end{aligned}
    \end{eqnarray*} 
    which implies that 
    \begin{eqnarray}\label{eq_u_dissip}
          \frac{\mathrm{d}}{\mathrm{d}t} \langle \Lambda^3 u^h,\Lambda^2\sigma^h \rangle + \frac{\alpha}{4} \|\Lambda^3 u^h\|_{L^2}^2
          \leq B_2\|\Lambda^4 \tau^h\|_{L^2}^2 + B_2\varepsilon_0 \Big( \|\nabla^3 u \|_{L^2}^2 + \|\nabla^3 \tau \|_{L^2}^2\Big).
    \end{eqnarray}
    Set $\eta_1 = \min\left\{\frac{1}{2B_0}\min\{\alpha,\kappa\}, \frac{\kappa\mu}{2B_2}\right\}.$ Then, multiplying \eqref{eq_u_dissip} by $\eta_1$, summing the result with \eqref{eq_nabla_3_dissipation_tau},   and  assuming  
    \begin{eqnarray}\label{eq_varepsilon}
       \varepsilon_0\leq \min\left\{\frac{\alpha}{16B_2},\frac{\alpha \eta_1}{16B_1},\frac{\kappa\beta}{4B_2\eta_1}, \frac{\kappa\beta}{4B_1}\right\}, 
    \end{eqnarray}   we  obtain that 
    \begin{eqnarray*} 
        \frac{\mathrm{d}}{\mathrm{d}t}\tilde{\mathcal{H}}_3(t) 
        +\frac{\kappa\mu}{2}\|\nabla^4\tau\|_{L^2}^2+\frac{\kappa\beta}{2}\|\nabla^3\tau\|_{L^2}^2  
        +\frac{\eta_1\alpha}{8} \|\Lambda^3 u^h\|_{L^2}^2
        \leq (B_1+B_2)\varepsilon_0\|\nabla^3 u^\ell\|_{L^2}^2,
    \end{eqnarray*}
    which, together with the fact ``$\|\nabla^3 u\|_{L^2}^2 = \|\Lambda^3 u\|_{L^2}^2\leq \|\Lambda^3 u^\ell\|_{L^2}^2 + \|\Lambda^3 u^h\|_{L^2}^2$'', implies 
    \begin{eqnarray}\label{eq_nabla_3_u_tau_dissip}
        \frac{\mathrm{d}}{\mathrm{d}t}\tilde{\mathcal{H}}_3(t) 
        + \min\left\{\frac{\beta}{2},\frac{\eta_1}{8}\right\}\left(\kappa\|\nabla^3\tau\|_{L^2}^2  
        +\alpha \|\nabla^3 u\|_{L^2}^2\right)
        \leq \Big((B_1+B_2)\varepsilon_0 + \frac{\eta_1\alpha}{8}\Big)\|\nabla^3 u^\ell\|_{L^2}^2.
    \end{eqnarray}
    Next, letting $\eta_2:=\frac{1}{2}\min\left\{\frac{\beta}{2},\frac{\eta_1}{8}\right\}, B_3:= (B_1+B_2)\varepsilon_0 + \frac{\eta_1\alpha}{8}$, we have, from   \eqref{eq_nabla_3_u_tau_dissip}, that
    \begin{eqnarray}\label{eq_nabla_3_u_tau_dissip_f}
        \frac{\mathrm{d}}{\mathrm{d}t}\tilde{\mathcal{H}}_3(t) 
        +  \eta_2 \tilde{\mathcal{H}}_3(t)
        \leq B_3\|\nabla^3 u^\ell\|_{L^2}^2.
    \end{eqnarray}
    Using { H\"older's} inequality, {Sobolev's} inequality,  {  \eqref{eq_1}, \eqref{eq_2} } and Lemma \ref{low_feq_est}, we have that 
    \begin{align*}
        \left(
        \int_{|\xi|\leq R}
         |\xi|^{6}|\hat{u}|^2
          {\rm d}\xi
        \right)^\frac{1}{2}
         \lesssim &\,
         (1+t)^{-\frac{9}{4}}
           \|(u_0,\tau_0)\|_{L^1} 
           +
          \int_0^{\frac{t}{2}}
          (1+t-s)^{-\frac{9}{4}}
           \|(u,\tau)\|_{L^2} \|\nabla( u,\tau)\|_{L^2}\mathrm{d}s \\
           & 
          +
          \int_{\frac{t}{2}}^t
          (1+t-s)^{-\frac{3}{2}}
          \|(u,\tau)\|_{L^2} \|\nabla^2( u,\tau)\|_{H^1} \mathrm{d}s \\ 
          \lesssim&\,(1+t)^{-\frac{9}{4}} + \int_0^{\frac{t}{2}}
          (1+t-s)^{-\frac{9}{4}} (1+s)^{ -\frac{3}{4}-\frac{5}{4}}\mathrm{d}s \\
           & 
          +
          \int_{\frac{t}{2}}^t
          (1+t-s)^{-\frac{3}{2}} (1+s)^{ -\frac{3}{4}-\frac{7}{4}} \mathrm{d}s 
          \lesssim  
         (1+t)^{-\frac{9}{4}}.
    \end{align*}
    Then, using {Gronwall's} inequality and  \eqref{eq_nabla_3_u_tau_dissip_f}, we get
    \begin{eqnarray}\label{eq_nabla_3_u_decay}
        \begin{aligned}
            \mathcal{H}_3(t) \leq 
            2\tilde{\mathcal{H}}_3(t) \leq&\, \mathrm{e}^{-\eta_2t} \tilde{\mathcal{H}}_3(0) 
            +B_3\int_0^t \mathrm{e}^{- \eta_2 (t-s)} \left(
                \int_{|\xi|\leq R}
                 |\xi|^{6}|\hat{u}(s)|^2
                  {\rm d}\xi
                \right) \mathrm{d}s   
            \lesssim \,(1+t)^{-\frac{9}{2}}.
        \end{aligned}
    \end{eqnarray}
    Now, we are in the position to improve the decay estimate of $\|\nabla^2\tau\|_{L^2}.$ Applying $\nabla^2$ to \eqref{system}$_2$ multiplying the result by $\nabla^2\tau$, and integrating with respect to $x$, we have, from \eqref{eq_nabla_3_u_decay}, that 
    \begin{equation*} 
        \begin{split}
              \frac{1}{2}\frac{\mathrm{d}}{\mathrm{d}t} \|\nabla^2 \tau\|_{L^2}^2 + \mu \|\nabla^{3} \tau \|_{L^2}^2 + \frac{\beta}{2}\|\nabla^2 \tau\|_{L^2}^2 
             \lesssim &\, \alpha \|\nabla^{3}u\|_{L^2}^2 + \|\nabla^2{Q}(\nabla u,\tau)\|_{L^2}^2 + \|\nabla^{2}(u\cdot \nabla\tau)\|_{L^2}^2\\
             \lesssim &\, \alpha \|\nabla^{3}u\|_{L^2}^2 + \|\nabla^{3}u\|_{L^2}^2\|\tau\|_{L^\infty}^2 + \|\nabla u\|_{L^\infty}^2\|\nabla^{2}\tau\|_{L^2}^2\\
             &+ \|\nabla^{3}\tau\|_{L^2}^2\|u\|_{L^\infty}^2 + \|\nabla \tau\|_{L^\infty}^2\|\nabla^{2}u\|_{L^2}^2 \\ 
             \lesssim &\,  (1+ t)^{- \frac{9}{2}  },
        \end{split}
 \end{equation*}
 which, together with {Gronwall's} inequality, implies that 
 \begin{equation}\label{eq_tau_2}
    \|\nabla^2 \tau (t) \|_{L^2}^2 \lesssim e^{-\beta t}\|\nabla^2 \tau_0\|_{L^2}^2 + \int_{0}^{t} e^{-\beta(t-s)}(1+s)^{- \frac{9}{2}  }\mathrm{d} s \leq C (1+ t)^{- \frac{9}{2} }.
\end{equation}
The proof is complete by \eqref{eq_nabla_3_u_decay} and \eqref{eq_tau_2}. 
\end{proof}
Next, we prove the optimal decay estimate for $\|\nabla^3 \tau\|_{L^2}.$ 
    \begin{lemma}\label{lemma_tau_decay}
        Under the assumptions of Theorem \ref{thm_more_decay}, it holds that 
                \begin{eqnarray*}
                      \|\nabla^3 \tau  (t) \|_{L^2} \lesssim (1+t)^{-\frac{11}{4}}. 
                \end{eqnarray*}
    \end{lemma}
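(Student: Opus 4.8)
The plan is to split $\nabla^3\tau$ into its low- and high-frequency parts, $\|\nabla^3\tau\|_{L^2}\le\|\nabla^3\tau^\ell\|_{L^2}+\|\nabla^3\tau^h\|_{L^2}$, and to show that each piece already decays at least like $(1+t)^{-11/4}$, the bottleneck being the low-frequency part. The decisive structural input is that, unlike $u$, the tensor $\tau$ is forced at linear order by $\alpha\mathbb{D}u\sim\nabla u$ and is genuinely damped by $\beta\tau$; this is what lets $\tau^\ell$ inherit a rate one half-power better than that of $u$ at the same differential order, namely the rate of $\nabla^4 u^\ell$ rather than $\nabla^3 u^\ell$.

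For the low-frequency part I would apply $\nabla^3\phi_0(D)$ to $\eqref{system}_2$ and test against $\nabla^3\tau^\ell$, which gives
\[
\tfrac12\tfrac{d}{dt}\|\nabla^3\tau^\ell\|_{L^2}^2+\mu\|\nabla^4\tau^\ell\|_{L^2}^2+\beta\|\nabla^3\tau^\ell\|_{L^2}^2
=\alpha\langle\nabla^3(\mathbb{D}u)^\ell,\nabla^3\tau^\ell\rangle+\langle\nabla^3 Q^\ell,\nabla^3\tau^\ell\rangle-\langle\nabla^3(u\cdot\nabla\tau)^\ell,\nabla^3\tau^\ell\rangle .
\]
The damping $\beta\|\nabla^3\tau^\ell\|_{L^2}^2$ absorbs all three right-hand terms after Young's inequality. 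The leading one is bounded by $\tfrac{\beta}{4}\|\nabla^3\tau^\ell\|_{L^2}^2+C\|\nabla^4 u^\ell\|_{L^2}^2$ since $\nabla^3(\mathbb{D}u)^\ell\sim\nabla^4 u^\ell$, and I would invoke Lemma \ref{low_feq_est} with $k=4$ to obtain $\|\nabla^4 u^\ell\|_{L^2}\lesssim(1+t)^{-11/4}$; the nonlinear integrals there close exactly as in Lemma \ref{lemma_decay_u_3}, using $\|\mathcal{F}_1\|_{L^2}\lesssim\|u\|_{L^\infty}\|\nabla u\|_{L^2}\lesssim(1+t)^{-11/4}$ and $\|\mathcal{F}_1\|_{L^1}\lesssim\|u\|_{L^2}\|\nabla u\|_{L^2}\lesssim(1+t)^{-2}$, together with the faster bounds on $\mathcal{F}_2$. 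Because $\phi_0$ is supported in $|\xi|\le R$, one has $\|\nabla^3 f^\ell\|_{L^2}\lesssim\|f\|_{L^2}$, so the remaining two terms are controlled by $\|Q\|_{L^2}+\|u\cdot\nabla\tau\|_{L^2}\lesssim(1+t)^{-13/4}$ via Lemma \ref{lemma_Sobolev} and the rates of Lemma \ref{lemma_decay_u_3}. Gronwall's inequality with the $\beta$-damping then yields $\|\nabla^3\tau^\ell(t)\|_{L^2}\lesssim(1+t)^{-11/4}$.

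For the high-frequency part I would rerun the combined energy machinery of Lemma \ref{lemma_decay_u_3} with every quantity localized by $\phi_1(D)$, working with $\tilde{\mathcal{H}}_3^h(t)$. The linear coupling between $\kappa\,{\rm div}\,\tau^h$ and $\alpha\mathbb{D}u^h$ cancels just as before, and the pressure drops out by ${\rm div}\,u=0$; crucially, on the high-frequency band the dissipations $\kappa\beta\|\nabla^3\tau^h\|_{L^2}^2$, $\kappa\mu\|\nabla^4\tau^h\|_{L^2}^2$ and $\tfrac{\eta_1\alpha}{8}\|\Lambda^3 u^h\|_{L^2}^2$ together dominate $\eta_2\tilde{\mathcal{H}}_3^h$, so that \emph{no} low-frequency forcing term survives. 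The simplification relative to Lemma \ref{lemma_decay_u_3} is that I may now treat every nonlinear contribution as pure forcing, bounding it directly by the decay rates already established there instead of absorbing it through smallness; each such term (for instance $\|\nabla u\|_{L^\infty}^2\|\nabla^3 u\|_{L^2}^2$ or $\|\nabla^2 Q\|_{L^2}^2$) decays at least like $(1+t)^{-17/2}$. Hence $\tfrac{d}{dt}\tilde{\mathcal{H}}_3^h+\eta_2\tilde{\mathcal{H}}_3^h\lesssim(1+t)^{-17/2}$, and Gronwall gives $\|\nabla^3\tau^h(t)\|_{L^2}\lesssim(1+t)^{-17/4}$, comfortably faster than required. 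Adding the two parts completes the proof.

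I expect the main obstacle to be the low-frequency estimate, since it alone fixes the final exponent $11/4$: one must verify that the order-four low-frequency bound $\|\nabla^4 u^\ell\|_{L^2}\lesssim(1+t)^{-11/4}$ genuinely holds, which is exactly where the extra half-power over $\|\nabla^3 u^\ell\|_{L^2}$ is gained, and this forces a careful check that $\|\mathcal{F}_1\|_{L^2}$ decays at the sharp rate $(1+t)^{-11/4}$ and that the $L^1$ and $L^2$ nonlinear integrals in Lemma \ref{low_feq_est} converge with the correct exponent. The high-frequency estimate, though it produces more terms (commutators from localizing $u\cdot\nabla(\cdot)$ together with the cross-term nonlinearities), is conceptually routine once the rates from Lemma \ref{lemma_decay_u_3} are fed in, because the strong high-frequency dissipation leaves only fast-decaying forcing.
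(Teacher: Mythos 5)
Your overall architecture (low--high splitting of $\nabla^3\tau$, the key input $\|\nabla^4 u^{\ell}\|_{L^2}\lesssim(1+t)^{-11/4}$ from Lemma \ref{low_feq_est} with $k=4$, and damped energy estimates closed by Gronwall) follows the same skeleton as the paper's proof, and your low-frequency step is sound: since $|\xi|^3\phi_0(\xi)$ is bounded, the nonlinear terms cost no derivatives at low frequency, the rates $\|Q\|_{L^2}+\|u\cdot\nabla\tau\|_{L^2}\lesssim(1+t)^{-13/4}$ are correct, and the $\beta$-damping yields $\|\nabla^3\tau^{\ell}\|_{L^2}\lesssim(1+t)^{-11/4}$. (The paper instead keeps the full $\nabla^3\tau$ in the final estimate and splits only the linear forcing $\alpha\nabla^3\mathbb{D}u$ into its $u^{\ell}$ and $u^{h}$ contributions; the two routes are interchangeable at this stage.)

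The genuine gap is in your high-frequency step, and it sits exactly where the paper's main work lies. You claim that after localizing by $\phi_1(D)$ ``every nonlinear contribution'' may be treated as pure forcing decaying like $(1+t)^{-17/2}$. This fails for the convection term of the $u$-equation, $-\alpha\langle\nabla^3(u\cdot\nabla u)^h,\nabla^3 u^h\rangle$: bounding it as forcing would require $\|\nabla^3(u\cdot\nabla u)\|_{L^2}$, which contains $\|u\cdot\nabla\nabla^3u\|_{L^2}\le\|u\|_{L^\infty}\|\nabla^4u\|_{L^2}$, and $\nabla^4 u$ is not controlled at all --- the solution is only in $H^3$ and, since $\epsilon$ may vanish (all constants must be uniform in $\epsilon\ge 0$), there is no viscous dissipation to absorb it into. (The analogous term $u\cdot\nabla\nabla^3\tau$ \emph{is} harmless, since one derivative can be pushed onto $\nabla^4\tau^h$ and absorbed by the $\mu$-dissipation; the obstruction concerns $u$ only.) In the un-localized estimate of Lemma \ref{lemma_decay_u_3} this worst term is annihilated by incompressibility, $\langle u\cdot\nabla\nabla^3u,\nabla^3u\rangle=0$, but $\phi_1(D)$ does not commute with multiplication by $u$, so the cancellation is destroyed after localization, and the resulting commutator $[\phi_1(D)\nabla^3,\,u\cdot\nabla]u$ is not covered by Lemma \ref{lemma_commutator}. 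Repairing this is the crux of the paper's proof: by Parseval and $\tilde{\phi}_1=\phi_1^2$ one rewrites $\langle\nabla^3(u\cdot\nabla u)^h,\nabla^3u^h\rangle=\langle\nabla^3(u\cdot\nabla u),\nabla^3u^{\tilde{h}}\rangle$, splits the transported field as $u=u^{\tilde{\ell}}+u^{\tilde{h}}$, uses incompressibility for the $u^{\tilde{h}}$ part, and uses $\|\nabla^4u^{\tilde{\ell}}\|_{L^2}\lesssim(1+t)^{-11/4}$ for the $u^{\tilde{\ell}}$ part; this also shows the true forcing rate is $(1+t)^{-25/4}$ (hence $\|\nabla^3(u^h,\tau^h)\|_{L^2}\lesssim(1+t)^{-25/8}$), not your $(1+t)^{-17/2}$. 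Finally, you cannot sidestep this by estimating $\tau^h$ alone: a $\tau^h$-only energy estimate leaves the linear coupling $\alpha\langle\nabla^3\mathbb{D}u^h,\nabla^3\tau^h\rangle\lesssim\|\nabla^3u^h\|_{L^2}\|\nabla^4\tau^h\|_{L^2}$, and with only $\|\nabla^3u^h\|_{L^2}\lesssim(1+t)^{-9/4}$ from Lemma \ref{lemma_decay_u_3}, Gronwall gives merely $\|\nabla^3\tau^h\|_{L^2}\lesssim(1+t)^{-9/4}$, short of the target; so the improved decay of $\nabla^3u^h$ --- and with it the careful treatment of the localized convection term --- is unavoidable.
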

    \begin{proof}
        To begin with, multiplying  $\alpha\nabla^3\phi_1(D)(\ref{system})_1$ and $\kappa\nabla^3\phi_1(D)(\ref{system})_2$  by $\nabla^3 u^h$ and $\nabla^3 \tau^h$,  respectively, then summing the result up, we have, from  integration by parts, that 
        \begin{equation} \label{eq_nabla_3_h_freq}
            \begin{split}
              &\frac12\frac{\mathrm{d}}{\mathrm{d}t}\left(\alpha\|\nabla^3u^h\|_{L^2}^2+\kappa\|\nabla^3\tau^h\|_{L^2}^2\right)+\alpha\epsilon\|\nabla^4 u^h\|_{L^2}^2+\kappa\mu\|\nabla^4\tau^h\|_{L^2}^2+\kappa\beta\|\nabla^3\tau^h\|_{L^2}^2
             \\ =&
             -\alpha \langle\nabla^3(u\cdot\nabla u)^h,\nabla^3 u^h \rangle + \kappa \langle\nabla^2(u\cdot\nabla \tau)^h,\Delta\nabla^2\tau^h \rangle
             -\kappa\langle\nabla^2(Q(\nabla u,\tau))^h,\nabla^4\tau^h\rangle=:\sum_{p=1}^3\mathcal{J}_p.
        \end{split}
    \end{equation}
     Using  Parseval's relation, {the fact that $\tilde{\phi}_1(\xi) =  {\phi}_1^2(\xi)$,} we can rewrite $\mathcal{J}_1$ in the following form 
    \begin{align*}\mathcal{J}_1 =&\,-\alpha \langle\nabla^3(u\cdot\nabla u)^h,\nabla^3 u^h \rangle\\ 
        =&\,-\alpha \langle\mathscr{F} \big[\nabla^3(u\cdot\nabla u)^h\big], \mathscr{F} \big[\nabla^3 u^h \big]\rangle\\ 
     = &\,-\alpha \Big\langle (i\xi_j)(i\xi_m)(i\xi_n) \phi_1(\xi)\mathscr{F}[(u\cdot\nabla u)],(i\xi_j)(i\xi_m)(i\xi_n) \phi_1(\xi)\hat{u} \Big\rangle \\ 
     = &\,-\alpha \Big\langle (i\xi_j)(i\xi_m)(i\xi_n)  \mathscr{F}[(u\cdot\nabla u)],(i\xi_j)(i\xi_m)(i\xi_n) \phi_1^2(\xi)\hat{u} \Big\rangle \\ 
     =&\,-\alpha \langle\nabla^3(u\cdot\nabla u) ,\nabla^3 u^{\tilde{h}} \rangle.
    \end{align*} 
    Then, using the incompressible condition and decomposition \eqref{eq_decom}, for $\mathcal{J}_1$
    we have 
    \begin{align*}
        \mathcal{J}_1 =& -\alpha \langle\nabla^3(u\cdot\nabla u) ,\nabla^3 u^{\tilde{h}} \rangle \\ 
        =& -\alpha \langle\nabla^3(u\cdot\nabla u^{\tilde{\ell}}) ,\nabla^3 u^{\tilde{h}} \rangle 
        -\alpha \langle\nabla^3(u\cdot\nabla u^{\tilde{h}}) -(u\cdot\nabla) \nabla^3 u^{\tilde{h}},\nabla^3 u^{\tilde{h}} \rangle\\ 
        =:&\, \mathcal{J}_{11} +  \mathcal{J}_{12}.
    \end{align*}
    Using H\"older inequality,  Cauchy inequality,   { \eqref{eq_1}, \eqref{eq_2}}, Lemmas \ref{lemma_Sobolev}, \ref{lemma_commutator} and \ref{lemma_decay_u_3}, for $\mathcal{J}_{11}$, we have 
    \begin{align*}
        \mathcal{J}_{11} =&\, -\alpha \langle\nabla^3(u\cdot\nabla u^{\tilde{\ell}}) ,\nabla^3 u^{\tilde{h}} \rangle \\ 
        \lesssim &\, \Big(\|\nabla^3 u\|_{L^2}\|\nabla  u^{\tilde{\ell}}\|_{L^\infty} +\|u\|_{L^\infty}\|\nabla^4 u^{\tilde{\ell}}\|_{L^2}\Big)\| \nabla^3 u^{\tilde{h}}\|_{L^2} \\ 
        \lesssim &\, \Big(\|\nabla^3 u\|_{L^2}\|\nabla^2  u \|_{H^1} +\|\nabla u\|_{H^1}\|\nabla^4 u^{\tilde{\ell}}\|_{L^2}\Big)\| \nabla^3 u  \|_{L^2} \\ 
        \lesssim &\, \Big((1+t)^{- \frac{9}{4}-\frac{7}{4} }
        +(1+t)^{-\frac{5}{4} }\|\nabla^4 u^{\tilde{\ell}}\|_{L^2}
        \Big)(1+t)^{-\frac{9}{4} }\\ 
        \lesssim&\, (1+t)^{-\frac{25}{4} } + (1+t)^{-\frac{7}{2}}\|\nabla^4 u^{\tilde{\ell}}\|_{L^2}.
    \end{align*}
    Similarly, for $\mathcal{J}_{12}$, we have 
    \begin{align*}
        \mathcal{J}_{12} =&\, -\alpha \langle\nabla^3(u\cdot\nabla u^{\tilde{h}}) -(u\cdot\nabla) \nabla^3 u^{\tilde{h}},\nabla^3 u^{\tilde{h}} \rangle\\ 
        \lesssim&\,\Big( \|\nabla^3 u\|_{L^2}\|\nabla  u^{\tilde{h}}\|_{L^\infty}
        +\|\nabla u\|_{L^\infty}\|\nabla^3 u^{\tilde{h}}\|_{L^2}\Big)\| \nabla^3 u^{\tilde{h}}\|_{L^2}\\
        \lesssim&\,   \|\nabla^2  u \|_{H^1}
        \| \nabla^3 u \|_{L^2}^2\\  
        \lesssim& \,(1+t)^{-\frac{7}{4}-\frac{9}{4}\times 2} = (1+t)^{-\frac{25}{4}}.
    \end{align*}
    Combining the above two estimates, we have 
    \begin{equation}\label{eq_J_1}
        \mathcal{J}_{1} = \mathcal{J}_{11} + \mathcal{J}_{12}
        \lesssim  (1+t)^{-\frac{25}{4} } + (1+t)^{-\frac{7}{2}}\|\nabla^4 u^{\tilde{\ell}}\|_{L^2}.
    \end{equation}
    Next, using the incompressible condition,   H\"older inequality,  Cauchy inequality, the decomposition \eqref{eq_decom}, { \eqref{eq_1}, \eqref{eq_2}}, Lemmas \ref{lemma_Sobolev}, \ref{lemma_commutator} and \ref{lemma_decay_u_3}, we can estimate $\mathcal{J}_2$ and $\mathcal{J}_3$ as follows.
    \begin{equation} \label{eq_J_2}
        \begin{split} 
            \mathcal{J}_2 =&\,\kappa \langle\nabla^2(u\cdot\nabla \tau)^h,\Delta\nabla^2\tau^h \rangle 
            \lesssim  \langle |\nabla^2(u\cdot\nabla \tau)| , |\nabla^4\tau^h| \rangle\\
             \lesssim&  \Big(\|\nabla^2 u\|_{L^2}\|\nabla  \tau\|_{L^\infty} +\|u\|_{L^\infty}\|\nabla^3 \tau\|_{L^2}\Big)
            \|\nabla^4\tau^h\|_{L^2}\\ 
            \leq &\,\frac{\kappa\mu}{4}\|\nabla^4\tau^h\|_{L^2}^2
            + C\Big(\|\nabla^2 u\|_{L^2}^2\|\nabla^2 \tau\|_{H^1}^2 +\|\nabla u\|_{H^1}^2\|\nabla^3 \tau\|_{L^2}^2\Big) \\ 
            \leq&\, \frac{\kappa\mu}{4}\|\nabla^4\tau^h\|_{L^2}^2 + 
            C (1+t)^{-\frac{7}{2}-\frac{9}{2} } +  C (1+t)^{-\frac{5}{2}-\frac{9}{2} }  \\ 
            \leq&\, \frac{\kappa\mu}{4}\|\nabla^4\tau^h\|_{L^2}^2 + C(1+t)^{- 7 },
        \end{split}
    \end{equation}
        and 
        \begin{equation} \label{eq_J_3}
            \begin{split}
            \mathcal{J}_3 =& -\kappa\langle\nabla^2(Q(\nabla u,\tau))^h,\nabla^4\tau^h\rangle \\ 
            \lesssim &\,
            \Big(\|\nabla^3 u\|_{L^2}\|\tau\|_{L^\infty}
            +\|\nabla u\|_{L^\infty}\|\nabla^2 \tau\|_{L^2} \Big)\|\nabla^4\tau^h\|_{L^2} \\ 
            \leq&\, \frac{\kappa\mu}{4}\|\nabla^4\tau^h\|_{L^2}^2 
            +C\Big(\|\nabla^3 u\|_{L^2}^2\|\nabla\tau\|_{H^1}^2
            +\|\nabla^2 u\|_{H^1}^2\|\nabla^2 \tau\|_{L^2}^2 \Big) \\
            \leq&\,\frac{\kappa\mu}{4}\|\nabla^4\tau^h\|_{L^2}^2 + C(1+t)^{-\frac{9}{2} -\frac{7}{2} } + C(1+t)^{-\frac{7}{2} -\frac{9}{2} }\\  
            \leq&\,\frac{\kappa\mu}{4}\|\nabla^4\tau^h\|_{L^2}^2 + C(1+t)^{- 8 }.
        \end{split}
    \end{equation} 
   Substituting    \eqref{eq_J_1}, \eqref{eq_J_2} and \eqref{eq_J_3}   into \eqref{eq_nabla_3_h_freq}, we obtain that 
    \begin{eqnarray}\label{eq_nabla_3_h_1}
        \begin{aligned} 
            & \frac{\mathrm{d}}{\mathrm{d}t}\left(\alpha\|\nabla^3u^h\|_{L^2}^2+\kappa\|\nabla^3\tau^h\|_{L^2}^2\right) 
        + \kappa\mu\|\nabla^4\tau^h\|_{L^2}^2
         +\kappa\beta\|\nabla^3\tau^h\|_{L^2}^2 \\
         \lesssim&\, (1+t)^{-\frac{25}{4}} + (1+t)^{-\frac{7}{2}}\|\nabla^4 u^{\tilde{\ell}}\|_{L^2}.
        \end{aligned}
    \end{eqnarray}
    Using {H\"older's} inequality, {Sobolev's} inequality, { \eqref{eq_1}, \eqref{eq_2} } and  Lemmas \ref{low_feq_est}, \ref{lemma_Sobolev} and \ref{lemma_decay_u_3},  we have that 
    \begin{eqnarray}\label{eq_nabla_3_h_2}
        \begin{aligned} 
        \left(
        \int_{|\xi|\leq R}
         |\xi|^{8}|\hat{u}|^2
          {\rm d}\xi
        \right)^\frac{1}{2}
         \lesssim &\,
         (1+t)^{-\frac{11}{4}}
           \|(u_0,\tau_0)\|_{L^1} 
           +
          \int_0^{\frac{t}{2}}
          (1+t-s)^{-\frac{11}{4}}
           \|(u,\tau)\|_{L^2} \|\nabla( u,\tau)\|_{L^2}\mathrm{d}s \\
           & 
          +
          \int_{\frac{t}{2}}^t
          (1+t-s)^{- {2}}
          \|(u,\tau)\|_{L^2} \|\nabla^2( u,\tau)\|_{L^2}^{\frac{1}{2}} \|\nabla^3( u,\tau)\|_{L^2}^{\frac{1}{2}} \mathrm{d}s   \\ 
          \lesssim &\,  (1+t)^{-\frac{11}{4}} 
          +\int_0^{\frac{t}{2}}
          (1+t-s)^{-\frac{11}{4}}
            (1+s)^{-\frac{3}{4}}(1+s)^{-\frac{5}{4}} \mathrm{d}s
              \\ 
            &   +
          \int_{\frac{t}{2}}^t
          (1+t-s)^{- {2}} (1+s)^{-\frac{3}{4}}(1+s)^{-\frac{7}{4}\times \frac{1}{2}}(1+s)^{-\frac{9}{4}\times \frac{1}{2}} \mathrm{d}s    \\
          \lesssim&\,(1+t)^{-\frac{11}{4}}.
        \end{aligned}
    \end{eqnarray}
     Combining \eqref{eq_nabla_3_h_1} and \eqref{eq_nabla_3_h_2}, we get that
    \begin{eqnarray}\label{eq_nabla_3_h_3}
        \frac{\mathrm{d}}{\mathrm{d}t}\left(\alpha\|\nabla^3u^h\|_{L^2}^2+\kappa\|\nabla^3\tau^h\|_{L^2}^2\right)
        +\kappa\mu\|\nabla^4\tau^h\|_{L^2}^2 
        +\kappa\beta\|\nabla^3\tau^h\|_{L^2}^2
        \lesssim (1+t)^{-\frac{25}{4}}.
   \end{eqnarray}
   From the proof of \eqref{eq_u_dissip}, it is easy to deduce that
   \begin{eqnarray}\label{eq_u_dissip_plus}
    \begin{aligned}
      \frac{\mathrm{d}}{\mathrm{d}t} \langle \Lambda^3 u^h,\Lambda^2\sigma^h \rangle + \frac{\alpha}{4} \|\Lambda^3 u^h\|_{L^2}^2
    \leq &\, B_2\|\Lambda^4 \tau^h\|_{L^2}^2 +\|\nabla(u,\tau)\|_{H^1}^2\Big( \|\nabla^3 u \|_{L^2}^2 + \|\nabla^3 \tau \|_{L^2}^2\Big)  \\ 
    \leq &\,  B_2\|\Lambda^4 \tau^h\|_{L^2}^2 + C(1+t)^{-7}.
    \end{aligned} 
\end{eqnarray} 
 Then, \eqref{eq_nabla_3_h_3} along with $ \eta_1$\eqref{eq_u_dissip_plus} imply that  
\begin{eqnarray}\label{eq_nabla_3_u_tau_dissip_f_h_freq}
    \frac{\mathrm{d}}{\mathrm{d}t}\tilde{\mathcal{H}}^h_3(t) 
    +  \eta_2 \tilde{\mathcal{H}}^h_3(t)
    \lesssim  (1+t)^{-\frac{25}{4}}.
\end{eqnarray}
Using {Gronwall's} inequality, we get, from  \eqref{eq_nabla_3_u_tau_dissip_f_h_freq}, that 
\begin{eqnarray}\label{eq_nabla_3_tau_h_decay}
    \|\nabla^3 (u^h,\tau^h) (t) \|_{L^2}^2\lesssim {\mathcal{H}}^h_3(t)\lesssim
    \mathrm{e}^{-\eta_2t} \tilde{\mathcal{H}}_3^h(0) 
            + \int_0^t \mathrm{e}^{- \eta_2 (t-s)} (1+s)^{-\frac{25}{4}} \mathrm{d}s   
                 \lesssim (1+t)^{-\frac{25}{4}}.
\end{eqnarray}
Applying $\nabla^3$ to \eqref{system}$_2$ multiplying the result by $\nabla^3\tau$, and integrating with respect to $x$,
 we get, 
\begin{equation}\label{eq_tau_last}
    \begin{split}
        &\frac{1}{2}\frac{\mathrm{d}}{\mathrm{d}t} \|\nabla^3 \tau\|_{L^2}^2 + \mu \|\nabla^{4} \tau \|_{L^2}^2 +  {\beta} \|\nabla^3 \tau\|_{L^2}^2 \\ 
           = &\, \langle\nabla^2(u\cdot\nabla \tau) ,\Delta\nabla^2\tau  \rangle
           -\langle\nabla^2(Q(\nabla u,\tau)) ,\nabla^4\tau  \rangle
           +\langle \nabla^3\mathbb{D} u^{\ell} ,\nabla^3 \tau\rangle
           -\langle \nabla^3  u^{h} ,\nabla^3 \mathrm{div}\tau\rangle   
           =:  \sum_{j = 1}^{4}\mathcal{K}_{j}.
    \end{split}
\end{equation}
Using { \eqref{eq_1}, \eqref{eq_2}}, Lemmas \ref{lemma_Sobolev}
 and \ref{lemma_decay_u_3}, we have, for $\mathcal{K}_{1}$, that 
\begin{align*}
    \mathcal{K}_{1} =&\, \langle\nabla^2(u\cdot\nabla \tau) ,\Delta\nabla^2\tau  \rangle \\ 
    \lesssim &\, \Bigl(\|u\|_{L^\infty}\|\nabla^3\tau\|_{L^2} + \|\nabla^2 u\|_{L^2}\|\nabla\tau\|_{L^\infty}\Bigr)\|\nabla^4 \tau\|_{L^2}^2\\ 
    \leq &\,\frac{\mu}{6}\|\nabla^4 \tau\|_{L^2}^2 + 
    C\Bigl(\|\nabla u\|_{H^1}^2\|\nabla^3\tau\|_{L^2}^2 + \|\nabla^2 u\|_{L^2}^2\|\nabla^2\tau\|_{H^1}^2\Bigr) \\ 
    \leq&\, \frac{\mu}{6}\|\nabla^4 \tau\|_{L^2}^2 
    + C (1+t)^{-\frac{5}{2}-\frac{9}{2}} + C (1+t)^{-\frac{7}{2}-\frac{9}{2}} \\ 
    \leq&\, \frac{\mu}{6}\|\nabla^4 \tau\|_{L^2}^2 + C (1+t)^{ - 7}.
\end{align*}
Similarly, for $\mathcal{K}_{2}$, we have 
\begin{align*}
    \mathcal{K}_{2} =&\,-\langle\nabla^2(Q(\nabla u,\tau)) ,\nabla^4\tau  \rangle \\ 
    \lesssim&\,   \Bigl(\|\nabla u\|_{L^\infty}\|\nabla^2\tau\|_{L^2} + \|\nabla^3 u\|_{L^2}\| \tau\|_{L^\infty}\Bigr)\|\nabla^4 \tau\|_{L^2}^2\\ 
    \leq &\,\frac{\mu}{6}\|\nabla^4 \tau\|_{L^2}^2 + 
    C\Bigl(\|\nabla^2 u\|_{H^1}^2\|\nabla^2\tau\|_{L^2}^2 + \|\nabla^3 u\|_{L^2}^2\|\nabla \tau\|_{H^1}^2\Bigr) \\ 
    \leq&\, \frac{\mu}{6}\|\nabla^4 \tau\|_{L^2}^2 
    + C (1+t)^{-\frac{7}{2}-\frac{9}{2}} + C (1+t)^{-\frac{9}{2}-\frac{7}{2}} \\ 
    \leq&\, \frac{\mu}{6}\|\nabla^4 \tau\|_{L^2}^2 + C (1+t)^{ - 8}.
\end{align*}
For  $\mathcal{K}_{3}$, we have 
{
\begin{align*}
    \mathcal{K}_{3} =&\, \langle \nabla^3\mathbb{D} u^{\ell} ,\nabla^3 \tau\rangle  \\  
    \lesssim &\,   \|\nabla^4  u^{\ell}\|_{L^2} \|\nabla^3 \tau\|_{L^2}\\ 
    \leq &\,\frac{\beta}{2}\|\nabla^3 \tau\|_{L^2} + C\|\nabla^4  u^{\ell}\|_{L^2} ^2 \\ 
    \leq&\,\frac{\beta}{2}\|\nabla^3 \tau\|_{L^2} 
    +(1+t)^{-\frac{11}{2}},
\end{align*}
}
where  \eqref{eq_nabla_3_h_2} is used. 
Moreover, using \eqref{eq_nabla_3_tau_h_decay},  for $\mathcal{K}_{4}$, we have
{
\begin{align*}
    \mathcal{K}_{4} = &\, -\langle \nabla^3  u^{h} ,\nabla^3 \mathrm{div}\tau\rangle  \\ 
    \lesssim &\,  \|\nabla^3 u^{h}\|_{L^2} \|\nabla^4 \tau\|_{L^2}\\ 
    \leq &\, \frac{\mu}{6}\|\nabla^4 \tau\|_{L^2}^2 + C\|\nabla^3 u^{h}\|_{L^2} \\ 
    \leq &\, \frac{\mu}{6}\|\nabla^4 \tau\|_{L^2}^2 + C(1+t)^{-\frac{25}{4}}.
\end{align*}
}
Substituting the above four estimates into \eqref{eq_tau_last}, we get  
\begin{eqnarray}\label{eq_tau_3_decay_1}
     \frac{\mathrm{d}}{\mathrm{d}t} \|\nabla^3 \tau\|_{L^2}^2 + \mu \|\nabla^{4} \tau \|_{L^2}^2 +  {\beta} \|\nabla^3 \tau\|_{L^2}^2 
     \lesssim (1+ t)^{- \frac{11}{2}  }.
\end{eqnarray}
Using {Gronwall's} inequality, we get, from  \eqref{eq_tau_3_decay_1}, that 
\begin{eqnarray*}\label{eq_nabla_3_tau_decay_final}
    \|\nabla^3 \tau (t)  \|_{L^2}^2 \lesssim
    \mathrm{e}^{-\beta t} \|\nabla^3 \tau_0 \|_{L^2}^2
            + \int_0^t \mathrm{e}^{- \beta (t-s)} (1+s)^{-\frac{11}{2}} \mathrm{d}s   
                 \lesssim (1+t)^{-\frac{11}{2}}.
\end{eqnarray*}
The proof is complete.
    \end{proof}
    {
    \begin{remark}
        The key observation in the proof of Lemma \ref{lemma_tau_decay} is that the high frequency part $(u^{\tilde{h}},\tau^{\tilde{h}})$ decays faster and the low frequency part $(u^{\tilde{\ell}},\tau^{\tilde{\ell}})$ enjoys better regularity.
    \end{remark}
    }
    \section*{Acknowledgement}
    The author is grateful to Professor Huanyao Wen for the helpful discussions.  The author would like to thank the anonymous referees for the valuable comments and suggestions. 


\begin{thebibliography}{99}
    \bibitem{Adams_Fournier_2003}
    R.A. Adams, J.J.F. Fournier, Sobolev spaces, {\it volume 140 of Pure and Applied Mathematics Series}, 2nd edn. Academic
    Press, London, 2003.
    \bibitem{Bhave_Armstrong_Brown_1991}A.V. Bhave, R.C. Armstrong, R.A. Brown, Kinetic theory and rheology of dilute, nonhomogeneous polymer solutions. {\it J. Chem. Phys.}, 95(1991), 2988--3000.

    \bibitem{Chemin_Masmoudi_2001}
        J.Y. Chemin,  N. Masmoudi,  About lifespan of regular solutions of equations related to viscoelastic fluids. {\it SIAM J. Math. Anal.} 33 (2001), no. 1, 84–112.

    \bibitem{Constantin_Kliegl_2012}
    P. Constantin,  M. Kliegl,
Note on global regularity for two-dimensional Oldroyd-B fluids with diffusive stress.
{\it Arch. Ration. Mech. Anal.},  206 (2012), no. 3, 725–740.

\bibitem{Elgindi_Liu_2015} T.M. Elgindi, J.L. Liu, Global wellposedness to the generalized Oldroyd type models in $\mathbb{R}^3$. {\it J. Differential Equations}, 259(5) (2015), 1958--1966.

\bibitem{Elgindi_Rousset_2015} T.M. Elgindi, F. Rousset, Global regularity for some Oldroyd-B type models. {\it Comm. Pure Appl. Math.}, 68(11)(2015), 2005--2021.

\bibitem{Fang-Hieber-Zi} D.Y. Fang, M. Hieber, R.Z. Zi, Global existence results for Oldroyd-B Fluids in
exterior domains: The case of non-small coupling parameters. {\it Math. Ann.}, 357(2013), 687--709.

\bibitem{Fernandez-Guillpen-Ortega}
E. Fern${\rm \acute{a}}$ndez-Cara, F. Guill${\rm \acute{p}}$en and R. Ortega, Some theoretical results concerning non-Newtonian fluids of the Oldroyd kind. {\it Ann. Scuola Norm. Sup. Pisa}, 26 (1998), 1--29.

\bibitem{Guillop-1} C. Guillop\'e, J.C. Saut. Existence results for the flow of viscoelastic fluids with a differential
constitutive law. {\it Nonlinear Anal., Theory, Methods Appl.}, 15(1990), 849--869.

\bibitem{Hieber-Naito-Shibata}
M. Hieber, Y. Naito, Y. Shibata, Global existence results for Oldroyd-B fluids in exterior domains. {\it J. Differential Equations}, 252(2012), 2617--2629.


\bibitem{Hieber_Wen_Zi_2019}
M. Hieber, H. Wen,  R.Z. Zi, Optimal decay rates for solutions to the incompressible Oldroyd-B model in $\mathbb{R}^3$. {\it Nonlinearity}, 32 (2019), 833--852.

    \bibitem{HWWZ_2022}
    J.R. Huang, Y.H. Wang, H.Y. Wen, R.Z. Zi,   {Optimal time-decay estimates for an Oldroyd-B model with zero viscosity.}  {\it  J. Differential Equations}, 306 (2022), 456–491.

\bibitem{Lei_Masmoudi_Zhou_2010} Z. Lei, N. Masmoudi, Y. Zhou, Remarks on the blowup criteria for Oldroyd models. {\it J. Differential  Equations}, 248(2010), 328--341.

\bibitem{Lin_2012}
    F.H. Lin, Some analytical issues for elastic complex fluids, {\it Comm. Pure Appl. Math.}, 65 (2012) 893–919.

    \bibitem{Lions_Masmoudi_2000}
         P.L. Lions, N. Masmoudi, Global solutions for some Oldroyd models of non-Newtonian flows. {\it Chinese Ann. Math. Ser. B}, 21(2000), 131--146.
    
    \bibitem{Liu_Wang_Wen_2022}
    S.L. Liu, W.J. Wang, H.Y. Wen, The Cauchy problem for an inviscid Oldroyd-B model in three dimensions: Global well posedness and optimal decay rates, {\it Proc. Roy. Soc. Edinburgh Sect. A},  (2022) 1–50.
    

\bibitem{Majda_Bertozzi_2002}
A.J. Majda, A.L. Bertozzi,
Vorticity and incompressible flow, {\it Cambridge Texts in Applied Mathematics, 27}, Cambridge University Press, Cambridge, 2002.

\bibitem{Malek_etal_2018}
J. M\'{a}lek, V. Prů\v{s}a, T. Sk\v{r}ivan, E. S\"{u}li, Thermodynamics of viscoelastic rate-type fluids with stress diffusion. {\it Physics of Fluids}, 30(2018), 023101.

\bibitem{Molinet-Talhouk}
L. Molinet, R. Talhouk, On the global and periodic regular flows of viscoelastic fluids with a differential constitutive law, {\it Nonlinear Diff. Equations Appl.}, 11(2004), 349--359.



    
    \bibitem{Oldroyd_1958} J. Oldroyd, Non-Newtonian effects in steady motion of some idealized elasticoviscous liquids. {\it Proc. Roy. Soc. Edinburgh Sect. A}, 245 (1958), 278--297.

    \bibitem{Renardy_Thomases_2021}
    M. Renardy,  B. Thomases,   
    A mathematician's perspective on the Oldroyd B model: progress and future challenges. {\it J. Non-Newton. Fluid Mech.}, 293 (2021), Paper No. 104573, 12 pp.

    \bibitem{Sun_Zhang_2011}Y.Z. Sun, Z.F. Zhang, Global well-posedness for the 2D micro-macro models in the bounded domain, {\it Comm. Math. Phys.}, 303(2)(2011), 361–383.

    \bibitem{Taylor_2011_3}
    M.E. Taylor, Partial differential equations III. Nonlinear equations. {\it Applied Mathematical Sciences, 117}, 2nd edn. Springer, New York, 2011.
    
    \bibitem{Wang_Wen_2022}
    W.J. Wang, H.Y. Wen, Global well-posedness and time-decay estimates for compressible Navier-Stokes equations with reaction diffusion. {\it Sci. China Math.}, 65 (2022), 1199–1228.

    \bibitem{Fang-Zhang-Zi} R.Z. Zi, D.Y. Fang, T. Zhang, Global solution to the incompressible Oldroyd-B model in the critical $L^p$ framework: the case of the non-small coupling parameter. {\it Arch. Ration. Mech. Anal.}, 213(2)(2014), 651--687.

    \end{thebibliography}
    \end{document}